\numberwithin{equation}{section}
\theoremstyle{plain}
\newtheorem{theorem}{Theorem}[section]
\newtheorem{thm}[theorem]{Theorem}
\newtheorem{prop}[theorem]{Proposition}
\newtheorem{lem}[theorem]{Lemma}
\theoremstyle{definition}
\newtheorem{defn}[theorem]{Definition}
\newtheorem{eg}[theorem]{Example}
\theoremstyle{remark}
\newcommand{\F}{\mathbb{F}}
\numberwithin{equation}{section}
\newcommand{\nin}{\notin}
\newcommand{\isom}{\cong}
\newcommand{\Z}{\mathbb{Z}}
\newcommand{\Q}{\mathbb{Q}}
\newcommand{\N}{\mathbb{N}}
\newcommand{\foh}{\frac{1}{2}}
\newcommand{\comment}[1]{}
\newcommand{\Mod}[1]{\,\,\left(\operatorname{mod}\, #1\right)}
\newcommand{\abs}[1]{\left| #1 \right|}
\DeclareFontFamily{U}{wncy}{}
\DeclareFontShape{U}{wncy}{m}{n}{<->wncyr10}{}
\DeclareSymbolFont{mcy}{U}{wncy}{m}{n}
\DeclareMathSymbol{\Sha}{\mathord}{mcy}{"58}
\begin{document}

\title{On $2$-superirreducible polynomials over finite fields}

\author[Bober]{J. W. Bober}
\address{School of Mathematics, University of Bristol, 
Fry Building, Woodland Road, Bristol BS8 1UG, UK and the Heilbronn Institute for Mathematical Research, Bristol, UK}
\email{j.bober@bristol.ac.uk}

\author[Du]{L. Du}
\address{Department of Mathematics, Draper Building, Berea College, 101 Chestnut St., Berea, KY 40404, USA}
\email{dul@berea.edu}

\author[Fretwell]{D. Fretwell}
\address{Department of Mathematics and Statistics, Fylde College, Lancaster University, Lancaster LA1 4YF, UK}
\email{d.fretwell@lancaster.ac.uk}

\author[Kopp]{G. S. Kopp}
\address{Department of Mathematics, Louisiana State University, Lockett Hall, Baton Rouge, LA 70803, USA}
\email{kopp@math.lsu.edu}

\author[Wooley]{T. D. Wooley}
\address{Department of Mathematics, Purdue University, 150 N. University Street, West 
Lafayette, IN 47907-2067, USA}
\email{twooley@purdue.edu}

\subjclass[2020]
{11T06, 
12E05, 
11S05
}

\keywords{Irreducibility, finite fields, polynomial compositions}

\begin{abstract} 
We investigate $k$-superirreducible polynomials, by which we mean irreducible 
polynomials that remain irreducible under any polynomial substitution of positive 
degree at most $k$. Let $\mathbb F$ be a finite field of characteristic $p$. We show 
that no $2$-superirreducible polynomials exist in $\mathbb F[t]$ when $p=2$ and that 
no such polynomials of odd degree exist when $p$ is odd. We address the remaining case 
in which $p$ is odd and the polynomials have even degree by giving an explicit formula 
for the number of monic 2-superirreducible polynomials having even degree $d$. This 
formula is analogous to that given by Gauss for the number of monic irreducible 
polynomials of given degree over a finite field. We discuss the associated asymptotic 
behaviour when either the degree of the polynomial or the size of the finite field 
tends to infinity.
\end{abstract}

\maketitle

\section{Introduction}
Superirreducible polynomials are polynomials that resist factorization under polynomial 
substitutions. Let $R$ be a commutative domain with unity having field of fractions $F$, 
and consider a polynomial $f\in R[t]$. For each natural number $k$, we say that $f$ is 
{\it weakly $k$-superirreducible} over $R$ if $f(g(t))$ is irreducible over $F[t]$ for 
all polynomials $g\in R[t]$ having degree $k$. If the polynomial $f$ is weakly 
$k$-superirreducible over $R$ for $1\leq k\leq K$, then we say that $f$ is 
{\it $K$-superirreducible}. In this hierarchy, a $1$-superirreducible polynomial is simply 
an irreducible polynomial. It transpires that a polynomial may be weakly 
$k$-superirreducible and yet not weakly $(k-1)$-superirreducible (and consequently, not 
$k$-superirreducible). For example, one may check that $x^6+x^5+x^3+x^2+1$ is weakly 
$3$-superirreducible over $\F_2$ yet not weakly $2$-superirreducible. When $d\geq 2$ and 
$f\in R[t]$ has degree $d$, a consideration of the polynomial $f(t+f(t))$ reveals that $f$ 
cannot be $k$-superirreducible whenever $k\geq d$. The situation when $2\leq k<d$ is more 
subtle, however, and our focus in this paper lies on the simplest situation here in which 
$k=2$ and $R$ is a finite field. Let $\mathbb F_q$ denote the finite field of $q$ elements, 
and when $1\leq k<d$, denote by $s_k(q,d)$ the number of monic weakly $k$-superirreducible 
polynomials lying in $\mathbb F_q[t]$ having degree $d$. In \Cref{prop:exactcount}, we 
provide an explicit formula for $s_2(q,d)$ analogous to the formula given by Gauss for the 
number of monic irreducible polynomials of given degree over $\mathbb F_q$. A consequence of 
this formula delivers the asymptotic formula recorded in our first theorem.

\begin{thm}\label{theorem1.1} For $q$ a prime power and $d$ a positive integer, the number of $2$-superirreducible polynomials of degree $d$ satisfies: 
\begin{enumerate}
    \item[(a)] $s_2(q,d)=0$ whenever either $q$ is a power of $2$ or $d$ is odd;
    \item[(b)] $s_2(q,d)=0$ whenever $q>(d-1)^2$;
    \item[(c)] when $q$ is odd and $d\rightarrow \infty$ through the even integers,
    \begin{equation}\label{1.1}
    s_2(q,d)=\frac{q^d}{d2^q}+O\!\left(\frac{1}{d}q^{d/2}\right). 
    \end{equation}
\end{enumerate}
\end{thm}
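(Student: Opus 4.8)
The plan is to reduce $2$-superirreducibility to a concrete condition on the values of $f$ (equivalently, on a root $\alpha$) and then to read off all three parts from that condition. Fix a monic irreducible $f\in\F_q[t]$ of degree $d$ with a root $\alpha\in\F_{q^d}$, so $\F_q(\alpha)=\F_{q^d}$. Since reducibility of $f$ forces reducibility of every $f(g)$, weak $2$-superirreducibility already entails that $f$ itself is irreducible. Moreover, for a degree-$2$ substitution $g$, a root $\beta$ of $f(g(t))$ satisfies $g(\beta)=\alpha'$ for a conjugate $\alpha'$ of $\alpha$, whence $\F_q(\beta)\supseteq\F_{q^d}$ and $f(g(t))$ is irreducible of degree $2d$ precisely when $g(t)-\alpha$ is irreducible over $\F_{q^d}$. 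First I would record this equivalence. In odd characteristic, completing the square shows $g(t)-\alpha$ is irreducible over $\F_{q^d}$ exactly when its discriminant is a nonsquare there; letting $g$ range over all quadratics, these discriminants run through $\{s\alpha+e:s\in\F_q^\times,\ e\in\F_q\}$. Thus $f$ is $2$-superirreducible if and only if every such $s\alpha+e$ is a nonsquare in $\F_{q^d}$.

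The engine for the rest is the norm--character identity $\chi_{q^d}=\chi_q\circ\mathrm N_{\F_{q^d}/\F_q}$, where $\chi_q,\chi_{q^d}$ denote the quadratic characters of $\F_q,\F_{q^d}$. Since $f(x)=\mathrm N_{\F_{q^d}/\F_q}(x-\alpha)$ for $x\in\F_q$, we get $\chi_{q^d}(x-\alpha)=\chi_q(f(x))$, and the set of discriminants coincides with $\{s(x-\alpha):s\in\F_q^\times,\ x\in\F_q\}$, on which $\chi_{q^d}(s(x-\alpha))=\chi_q(s)^{d}\chi_q(f(x))$. The case $p=2$ of part~(a) is separate and immediate: taking $g(t)=t^2$ makes $f(t^2)$ a perfect square, so no $2$-superirreducible polynomial exists. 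When $d$ is odd (and $q$ odd) the factor $\chi_q(s)^{d}=\chi_q(s)$ takes both signs as $s$ runs over $\F_q^\times$, so $\chi_{q^d}(s(x-\alpha))$ cannot equal $-1$ for all $s$; the criterion fails for every $\alpha$, giving $s_2(q,d)=0$. When $d$ is even the $s$-dependence disappears and the criterion reduces to $\chi_q(f(x))=-1$ for all $x\in\F_q$, that is, $f$ takes only nonsquare values on $\F_q$. Part~(b) then follows from Weil's bound: $T:=\sum_{x\in\F_q}\chi_q(f(x))$ satisfies $|T|\le(d-1)\sqrt q$ for squarefree $f$, so the value $T=-q$ demanded by the criterion is impossible once $q>(d-1)^2$.

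For part~(c), with $d$ even, I would count the degree-$d$ elements $\alpha\in\F_{q^d}$ satisfying $\chi_{q^d}(x-\alpha)=-1$ for all $x\in\F_q$, since $s_2(q,d)$ is $1/d$ times this count. Writing the nonsquare indicator as $\tfrac12\bigl(1-\chi_{q^d}(x-\alpha)\bigr)$ and first summing over all $\alpha\in\F_{q^d}$, expansion of $\prod_{x\in\F_q}$ produces the main term $2^{-q}q^{d}$ together with sums $\sum_{\alpha\in\F_{q^d}}\chi_{q^d}\bigl(\prod_{x\in S}(\alpha-x)\bigr)$ over nonempty $S\subseteq\F_q$. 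Each is a quadratic character sum of a squarefree polynomial of degree $|S|$, hence at most $(|S|-1)q^{d/2}$ by Weil, for a total error $O(q^{d/2})$. This tallies all $\alpha$ meeting the condition, of every degree $d'\mid d$; but a subfield element $\alpha\in\F_{q^{d'}}$ can satisfy it only when $d/d'$ is odd, because $\F_{q^{d'}}^\times\subseteq(\F_{q^d}^\times)^2$ precisely when $d/d'$ is even. A Möbius inversion over the divisors $d'$ with $d/d'$ odd strips off these lower-degree contributions, which total $O(q^{d/3})$ since the largest proper such $d'$ is at most $d/3$. Dividing by $d$ yields $s_2(q,d)=q^d/(d2^q)+O(d^{-1}q^{d/2})$, which is \eqref{1.1}; this also follows by extracting the leading behaviour of the exact formula of \Cref{prop:exactcount}.

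The crux, and the only genuinely nontrivial step, is the reduction in the first two paragraphs: translating ``irreducible under every quadratic substitution'' into the single statement that $f$ is everywhere nonsquare on $\F_q$, via the composition/Galois argument together with the identity $\chi_{q^d}=\chi_q\circ\mathrm N_{\F_{q^d}/\F_q}$. Once this criterion is in hand, parts (a) and (b) are essentially immediate, and the only care needed for (c) is tracking the parity of $d$ (and of $d/d'$) inside the quadratic character and verifying the hypotheses of the Weil bound (squarefreeness, and that the relevant polynomials are not perfect squares); the character-sum estimate and the Möbius bookkeeping over subfields are then routine.
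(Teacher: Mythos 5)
Your argument is correct, and on parts (a) and (c) it follows essentially the paper's route: the same Capelli-type reduction to irreducibility of $g(t)-\alpha$ over $\F_{q^d}$ (\Cref{lem:extension}); the same one-parameter criterion (the paper reduces to $g(t)=t^2-c$ via shift-invariance and evenness of $d$ in \Cref{lem:quadshifts}, while your discriminant parametrization $\{s\alpha+e : s\in\F_q^\times,\, e\in\F_q\}$ yields the identical condition and, as a bonus, disposes of odd $d$ in one line where the paper uses the explicit substitutions $g(t)=t^2$ and $g(t)=bt^2$); the same expansion of the indicator $\prod_{c}\tfrac12\left(1-\chi_{q^d}(\alpha+c)\right)$ with the Weil--Schmidt bound $(n-1)q^{d/2}$ on the resulting complete character sums (\Cref{prop:exactcount} and \Cref{lem:aasymp}); and the same parity observation that $\chi_{q^d}$ restricts trivially to $\F_{q^e}$ exactly when $d/e$ is even, so that proper subfield contributions come only from $e\le d/3$. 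Where you genuinely diverge is part (b). The paper bounds $\sum_{c\in\F_q}\chi_{q^d}(\alpha+c)$ directly as a character sum on the $d$-dimensional algebra $\F_q[\alpha]$, citing Wan (\Cref{lem:largeqcharsum}); you instead push the criterion down to the base field via $\chi_{q^d}=\chi_q\circ\mathrm{N}_{\F_{q^d}/\F_q}$ and $f(x)=\mathrm{N}_{\F_{q^d}/\F_q}(x-\alpha)$ for $x \in \F_q$, so that $2$-superirreducibility (for even $d$, odd $q$) becomes the purely base-field statement that $\chi_q(f(x))=-1$ for every $x\in\F_q$, whereupon the classical Weil bound $\bigl|\sum_{x\in\F_q}\chi_q(f(x))\bigr|\le (d-1)\sqrt{q}$ rules out the required value $-q$ once $q>(d-1)^2$. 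The two estimates are in fact the same (your norm identity shows the paper's sum equals $\sum_{x}\chi_q(f(x))$, the sign being harmless since $\chi_{q^d}(-1)=1$ for even $d$), but your route invokes only the standard Weil bound over $\F_q$ rather than Wan's result, which is a genuine simplification; the paper's formulation has the mild advantage of working uniformly with $\alpha$ throughout its counting argument. Two loose ends you should make explicit: the Weil bound in (b) applies because an irreducible polynomial over a perfect field is separable, hence squarefree and not a constant multiple of a square; and part (b) in the cases where $q$ is even or $d$ is odd should simply be referred back to part (a). (In (c), the finitely many $\alpha\in\F_q$, for which some factor $\chi_{q^d}(x-\alpha)$ vanishes and the product is not a genuine indicator, contribute $O(q)$ and are in any case removed along with the subfield terms, so your error analysis stands.)
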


Superirreducibility has in fact been studied in the past, although not by name. Strengthening the 
above pedestrian observation concerning $f(t+f(t))$, it follows from work of Schinzel 
\cite[Lemma 10]{schinzel} that a polynomial of degree $d\geq 3$ lying in $\mathbb Z[t]$ cannot be 
$(d-1)$-superirreducible. More recently, Bober et al.~\cite {BFMW} have considered 
superirreducibility as a potential limitation to the understanding of smooth integral values of 
polynomials. More precisely, these authors show in \cite[Theorem 1.1]{BFMW} that quadratic 
polynomials $f\in \Z[t]$ admit polynomial substitutions $g\in \Z[t]$ of arbitrarily high degree 
$k$ having the property that all of the factors of $f(g(t))$ have degree 
$O(k/\sqrt{\log \log k})$. Consider a positive number $\varepsilon$ and an integer $k$ 
sufficiently large in terms of $\varepsilon$. Then on taking $m$ to be an integer large enough in 
terms of both $\varepsilon$ and $k$, it follows that with $n=g(m)$, the polynomial value $f(n)$ 
has all of its prime factors smaller than $n^\varepsilon$ \cite[Corollary 1.2]{BFMW}. This 
provides strong information about smooth values $f(n)$ for $n\in \Z$. It is thus important to 
understand polynomials $f$ (in degree higher than $2$) that resist such factorizations of 
compositions $f(g(t))$, since the existence of smooth values of such polynomials will necessarily 
be particularly challenging to establish. In \cite[Section 6]{BFMW}, it is shown that 
$2$-superirreducible polynomials exist in $\mathbb Q[t]$ having degree $6$. Moreover, in work 
contemporaneous with that reported on herein, the thesis of Du \cite[Theorem 1.9.1]{thesis} has 
exhibited some infinite families of $2$-superirreducible polynomials in $\mathbb Q[t]$ of degree 
$4$, including the simple examples $f(t)=t^4+1$ and $f(t)=t^4+2$.\par

With a potential local-global principle in mind, it might be expected that insights 
into the superirreducibility of polynomials over $\Z$ and over $\Q$ might be gained by 
examining corresponding superirreducibility properties over the $p$-adic integers 
$\Z_p$ and $p$-adic numbers $\Q_p$. Such considerations lead in turn to an 
investigation of the superirreducibility of polynomials over finite fields. We finish 
our paper by disappointing the reader in \Cref{sec:4} with the news that if $k\ge 2$ and $p$ 
is any prime number, then $k$-superirreducible polynomials exist over neither $\Z_p$ 
nor $\Q_p$.

Other authors have considered arithmetic properties of compositions, especially compositional iterates, of polynomials within the context of arithmetic dynamics. See the papers \cite{jonesandlevy,odoni} and the survey \cite[Section 19]{benedetto} for a wide variety of results and questions concerning irreducibility of polynomial iterates and composites.

\section{Basic lemmas}
In this section we prove the basic lemmas that provide the infrastructure for our 
subsequent discussions concerning superirreducibility. Recall the definition of 
$k$-superirreducibility provided in our opening paragraph. We begin by expanding on 
the observation that there are no weakly $k$-superirreducible polynomials of degree $k$ or larger.

\begin{lem}\label{highk}
Let $R$ be a commutative domain with unity, and let $f\in R[t]$ be a polynomial of 
degree $d\ge 2$. Then $f(t)$ is not weakly $k$-superirreducible for any $k\geq d$.
\end{lem}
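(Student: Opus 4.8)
The plan is to disprove weak $k$-superirreducibility directly from its definition by exhibiting a single polynomial $g \in R[t]$ of degree exactly $k$ for which $f(g(t))$ factors nontrivially over $F[t]$. Since $k \geq d$, the exponent $k-d$ is a nonnegative integer, and I would take
\[
g(t) = t + t^{k-d} f(t),
\]
which for $k=d$ recovers the polynomial $t + f(t)$ mentioned in the introduction. Because $R$ is a domain, the leading coefficient of $f$ does not vanish, so the term $t^{k-d} f(t)$ has degree $k \geq d \geq 2$ and dominates the linear term $t$; hence $\deg g = k$ as required, and clearly $g \in R[t]$.

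The key mechanism is the elementary divisibility $(a-b)\mid\bigl(f(a)-f(b)\bigr)$, valid for any $a,b$ in a commutative ring and obtained by factoring $a^j - b^j$ termwise over the monomials of $f$. Applying this with $a = g(t)$ and $b = t$, and observing that $g(t) - t = t^{k-d} f(t)$ is divisible by $f(t)$, I would deduce that $f(t)\mid\bigl(f(g(t)) - f(t)\bigr)$, and therefore $f(t)\mid f(g(t))$.

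It then remains to confirm that this factorization is genuinely nontrivial over $F[t]$. Writing $f(g(t)) = f(t)\cdot Q(t)$ with $Q \in F[t]$, a degree count gives $\deg f(g(t)) = dk$ and $\deg f = d$, so $\deg Q = d(k-1) \geq d(d-1) \geq 2 > 0$. Thus both $f(t)$ and $Q(t)$ are nonconstant, neither is a unit in $F[t]$, and $f(g(t))$ is reducible over $F[t]$. Consequently $f$ fails to be weakly $k$-superirreducible for every $k \geq d$.

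I expect the only points requiring care to be the bookkeeping that guarantees $\deg g = k$ exactly (where the hypothesis that $R$ is a domain is used, to prevent cancellation of the leading coefficient) and the verification that the cofactor $Q$ is nonconstant. The core divisibility is immediate from the stated identity, so no deeper input is needed and the argument is uniform across all $k \geq d$.
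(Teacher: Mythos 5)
Your proof is correct and takes essentially the same route as the paper, which uses the substitution $g(t)=t+t^rf(t)$ (your $r=k-d$) and the congruence $f(g(t))\equiv f(t)\equiv 0 \pmod{f(t)}$ --- precisely your divisibility $(a-b)\mid (f(a)-f(b))$ in different clothing. Your extra bookkeeping (that $\deg g=k$ exactly and that the cofactor $Q$ is nonconstant, using the domain hypothesis so that $\deg f(g(t))=dk$) fills in details the paper leaves implicit.
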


\begin{proof}
For each non-negative integer $r$, consider the degree $d+r$ substitution 
$g(t)=t+t^rf(t)$. We have
\begin{equation*}
    f(g(t))=f(t+t^rf(t))\equiv f(t)\equiv 0 \Mod{f(t)}.
\end{equation*}
Thus, we see that $f(g(t))$ is divisible by $f(t)$, and it is hence reducible. It follows 
that $f$ is not weakly $k$-superirreducible for $k\ge d$.  
\end{proof}

The next lemma is a mild generalization of \cite[Proposition 3.1]{BFMW} to arbitrary 
fields. The latter proposition is restricted to the rational field $\mathbb Q$, and we 
would be remiss were we not to record that Schinzel \cite[Theorem 22]{Sch2000} attributes 
this conclusion to Capelli.

\begin{lem}\label{lem:extension}
Let $K$ be a field. Suppose that $f(x)\in K[x]$ is a monic irreducible polynomial, let 
$\alpha$ be a root of $f$ lying in a splitting field extension for $f$ over $K$, and 
put $L=K(\alpha)$. Then, for any non-constant polynomial $g(t)\in K[t]$, the 
polynomial $f(g(t))$ is reducible in $K[t]$ if and only if $g(t)-\alpha$ is reducible 
in $L[t]$.
\end{lem}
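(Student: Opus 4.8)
The plan is to avoid manipulating the factorization $f(g(t))=\prod_i(g(t)-\alpha_i)$ over a splitting field directly, and instead to reduce everything to a single application of the tower law for field extensions. Write $n=\deg f=[L:K]$ and $m=\deg g$, so that $\deg f(g(t))=mn$. First I would introduce an element $\beta$, lying in some extension of $L$, with $g(\beta)=\alpha$; equivalently $\beta$ is a root of $g(t)-\alpha\in L[t]$, and since $f(g(\beta))=f(\alpha)=0$ it is simultaneously a root of $f(g(t))\in K[t]$. The linchpin is the observation that $\alpha=g(\beta)$ already lies in $K(\beta)$, so that $L=K(\alpha)\subseteq K(\beta)$ and in fact $K(\beta)=L(\beta)$. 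The tower law then yields
\[
[K(\beta):K]=[L(\beta):L]\,[L:K]=[L(\beta):L]\cdot n .
\]

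Next I would translate each irreducibility assertion into a statement about these degrees. Let $P$ and $Q$ denote the monic minimal polynomials of $\beta$ over $K$ and over $L$ respectively, so that $\deg P=[K(\beta):K]$ and $\deg Q=[L(\beta):L]$. Since $\beta$ is a root of $f(g(t))$, the polynomial $P$ divides $f(g(t))$, and because $f(g(t))$ has degree $mn$ it is irreducible over $K$ precisely when $\deg P=mn$ (equality of degrees forces $f(g(t))$ to be a unit multiple of the irreducible $P$). In the same way $Q$ divides $g(t)-\alpha$, and $g(t)-\alpha$ is irreducible over $L$ precisely when $\deg Q=m$. Combining these two equivalences with the displayed tower identity gives
\[
f(g(t))\text{ irreducible over }K \iff \deg P=mn \iff \deg Q=m \iff g(t)-\alpha\text{ irreducible over }L,
\]
and, as both $f(g(t))$ and $g(t)-\alpha$ have positive degree, negating both sides of this equivalence records exactly the reducibility statement of the lemma.

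The argument is short once the tower is in place, so the real work here is conceptual rather than computational: the step that must be got right is the linchpin $K(\beta)=L(\beta)$, which is what allows the two minimal-polynomial divisibilities to interact through a single degree equation. I would emphasise that no separability hypothesis is needed, since only the tower law and the fact that the degree of a minimal polynomial equals the degree of the generated extension are used; in particular the possibly repeated roots of $f$ in the inseparable case cause no difficulty. As an independent sanity check on the less trivial implication, one may note the norm identity $f(g(t))=N_{L[t]/K[t]}(g(t)-\alpha)=\prod_i(g(t)-\alpha_i)$, where the $\alpha_i$ are the roots of $f$: multiplicativity of the norm turns any proper factorization of $g(t)-\alpha$ over $L$ into a proper factorization of $f(g(t))$ over $K$, confirming the easier direction without appealing to the degree count above.
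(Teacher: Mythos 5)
Your proof is correct, and it takes a genuinely different route from the paper's. The paper computes the $K$-algebra $K[x,t]/(f(x),g(t)-x)$ in two ways to obtain a $K$-algebra isomorphism $K[t]/(f(g(t)))\cong L[t]/(g(t)-\alpha)$, and then reads off irreducibility on each side from whether the quotient is a field. You instead fix a root $\beta$ of $g(t)-\alpha$, use the key observation $\alpha=g(\beta)\in K(\beta)$ to get $K(\beta)=L(\beta)$, and push the tower law $[K(\beta):K]=n\,[L(\beta):L]$ through the two minimal-polynomial divisibilities $P\mid f(g(t))$ and $Q\mid g(t)-\alpha$, so that the single identity $\deg P=n\deg Q$ forces the conditions $\deg P=mn$ and $\deg Q=m$ to hold or fail together; this is essentially the classical Capelli-style degree argument, and since the whole chain of equivalences is anchored to one fixed $\beta$, the choice of root is immaterial. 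Your remark that no separability is needed is accurate for both arguments. What your route buys is elementarity (nothing beyond the tower law and the equality of $\deg$ of a minimal polynomial with the degree of the generated extension); what it gives up is the isomorphism \eqref{eq:Kalgisom} itself, which the paper reuses immediately afterwards to give $K[t]/(f(g(t)))$, and hence every quotient $K[t]/(h(t))$ with $h\mid f(g(t))$, the structure of an $L$-vector space, proving $\deg(f)\mid\deg(h)$ for \emph{every} divisor $h$ --- a strengthening your degree count does not directly yield. One small caveat on your closing sanity check: the identity $f(g(t))=N_{L(t)/K(t)}(g(t)-\alpha)=\prod_i(g(t)-\alpha_i)$ should be read over the rational function fields, with the roots $\alpha_i$ counted with multiplicity in the inseparable case; as you note, it is not load-bearing.
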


\begin{proof}
We consider the $K$-algebra $A = K[x,t]/(f(x),g(t)-x)$ from two perspectives. First, 
on noting that $f(x)$ is irreducible over $K[x]$, we find that 
$K[x]/(f(x))\isom K[\alpha]=K(\alpha)=L$. Thus, on the one hand,
\begin{equation*}
    A\isom \frac{K[x,t]/(f(x))}{(g(t)-x)}\isom L[t]/(g(t)-\alpha).
\end{equation*}
Here, of course, we view $(g(t)-x)$ as being an ideal in $K[x,t]/(f(x))$. On the other 
hand, similarly,
\begin{equation*}
    A\isom \frac{K[x,t]/(g(t)-x)}{(f(x))}\isom K[t]/(f(g(t))).
\end{equation*}
Thus, we obtain a $K$-algebra isomorphism 
\begin{equation}\label{eq:Kalgisom}
K[t]/(f(g(t)))\isom L[t]/(g(t)-\alpha).
\end{equation}
Hence $K[t]/(f(g(t)))$ is a field if and only if $L[t]/(g(t)-\alpha)$ 
is a field, and thus $f(g(t))$ is irreducible in $K[t]$ if and only if $g(t)-\alpha$ is 
irreducible in $L[t]$. The desired conclusion follows.
\end{proof}

We take the opportunity to record a further consequence of the relation 
\eqref{eq:Kalgisom}, since it may be of use in future investigations concerning 
superirreducibility.

\begin{lem}
Let $K$ be a field. Suppose that $f(x)\in K[x]$ is a monic irreducible polynomial, and 
let $g(t)\in K[t]$ be any non-constant polynomial. Then, for any polynomial divisor 
$h(t)$ of $f(g(t))$, we have $\deg(f)|\deg(h)$.
\end{lem}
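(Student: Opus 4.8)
The plan is to reduce to the case of irreducible divisors and then read off the divisibility from the tower law for field extensions. Since the degree of a product of polynomials is the sum of the degrees of its factors, it suffices to prove that $\deg(f)\mid\deg(h)$ whenever $h$ is a monic irreducible factor of $f(g(t))$ in $K[t]$: for a general monic divisor $h=\prod_i h_i$ with each $h_i$ irreducible, one then has $\deg(h)=\sum_i\deg(h_i)$, and $\deg(f)$ divides each summand. (Replacing a divisor by a nonzero scalar multiple does not change its degree, so restricting to monic divisors loses nothing.)

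Now fix a monic irreducible factor $h$ of $f(g(t))$ and let $\beta$ be a root of $h$ in a fixed algebraic closure $\overline{K}$. First I would observe that $f(g(\beta))=0$, since $h(\beta)=0$ and $h$ divides $f(g(t))$; hence $g(\beta)$ is a root of $f$. Because $f$ is monic and irreducible over $K$, it is the minimal polynomial of $g(\beta)$, so $[K(g(\beta)):K]=\deg(f)$. On the other hand, $h$ irreducible gives $[K(\beta):K]=\deg(h)$. Since $g\in K[t]$, we have $g(\beta)\in K(\beta)$, so $K\subseteq K(g(\beta))\subseteq K(\beta)$, and the tower law yields
\[
\deg(h)=[K(\beta):K]=[K(\beta):K(g(\beta))]\,[K(g(\beta)):K]=[K(\beta):K(g(\beta))]\cdot\deg(f).
\]
This exhibits $\deg(f)$ as a divisor of $\deg(h)$, completing the argument.

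I do not expect a serious obstacle here; the content is entirely captured by the tower law, and in particular no separability hypothesis is needed, since $[K(\beta):K]=[K(\beta):K(g(\beta))]\,[K(g(\beta)):K]$ holds for an arbitrary finite extension. The only points requiring a little care are the reduction to monic irreducible divisors and the identification of $f$ as the minimal polynomial of $g(\beta)$, both of which are routine. As an alternative more in keeping with the framing of this lemma, one could instead argue directly from the isomorphism \eqref{eq:Kalgisom}: a monic divisor $h\mid f(g(t))$ corresponds to an ideal of $L[t]/(g(t)-\alpha)$, hence to a monic divisor $p\mid g(t)-\alpha$ in $L[t]$, and comparing $K$-dimensions of the associated quotient rings gives $\deg(h)=[L:K]\deg(p)=\deg(f)\deg(p)$. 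The main nuisance on that route is verifying that the quotient $K[t]/(h)$ matches $L[t]/(p)$ under the isomorphism, which the tower-law argument sidesteps entirely.
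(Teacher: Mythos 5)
Your proof is correct, and it takes a genuinely different route from the paper. You reduce to monic irreducible factors of $f(g(t))$, pick a root $\beta$ of such a factor $h$, observe that $g(\beta)$ is a root of $f$ so that $f$ is its minimal polynomial, and apply the tower law to $K\subseteq K(g(\beta))\subseteq K(\beta)$; as you note, no separability is needed, and the reduction step via additivity of degrees is sound. The paper instead argues entirely ring-theoretically from the isomorphism \eqref{eq:Kalgisom}: since $K[t]/(f(g(t)))\isom L[t]/(g(t)-\alpha)$ is an $L$-algebra and any ring quotient of an $L$-algebra is again an $L$-algebra, the quotient $K[t]/(h(t))$ is an $L$-vector space for \emph{every} divisor $h$ of $f(g(t))$, whence $\deg(h)=\dim_K K[t]/(h(t))=[L:K]\dim_L K[t]/(h(t))=\deg(f)\dim_L K[t]/(h(t))$. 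Note that this is slicker than the alternative you sketched at the end: the paper never needs to match $K[t]/(h)$ with some $L[t]/(p)$ for a divisor $p$ of $g(t)-\alpha$ --- the nuisance you flagged simply does not arise, because only the inherited $L$-module structure of the quotient is used, not an identification of it. What each approach buys: yours is elementary and self-contained (minimal polynomials and the tower law, no appeal to \eqref{eq:Kalgisom}), at the cost of factoring $h$ into irreducibles and choosing roots in $\overline{K}$; the paper's handles all divisors uniformly in one dimension count and explains the divisibility structurally, which fits its framing of the lemma as a corollary of \eqref{eq:Kalgisom}.
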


\begin{proof}
The relation \eqref{eq:Kalgisom} shows that $K[t]/(f(g(t))$ has the structure of an 
$L$-algebra. Any ring quotient of an $L$-algebra is still an $L$-algebra. Thus, we see 
that $K[t]/(h(t))$ is an $L$-algebra, and in particular a vector space over $L$. 
Consequently, one has 
\begin{equation*}
\deg(h)=\dim_K K[t]/(h(t))=[L:K]\left(\dim_L K[t]/(h(t))\right) 
=\deg(f)\left( \dim_L K[t]/(h(t)) \right),
\end{equation*}
and thus $\deg(f)|\deg(h)$.
\end{proof}

We also provide a trivial lemma explaining the relationship between our definitions of superirreducibility and weak superirreducibility for different values of $k$.
\begin{lem}\label{lem:weak}
    Let $R$ be a commutative domain with unity, and let $f(x) \in R[x]$ and $k \in \N$. 
    The polynomial $f(x)$ is $k$-superirreducible if and only if it is weakly 
    $\ell$-superirreducible for all natural numbers $\ell \leq k$. The polynomial $f(x)$ is weakly $k$-superirredcubible if and only if it is weakly $\ell$-superirreducible for all natural numbers $\ell$ dividing $k$.
\end{lem}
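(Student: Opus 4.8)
The plan is to treat the two assertions separately, since the first is essentially a restatement of the definition while the second rests on a single composition trick.

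For the first assertion, I would simply unwind the definition of $K$-superirreducibility given in the introduction: by construction $f$ is $k$-superirreducible precisely when it is weakly $\ell$-superirreducible for every $\ell$ in the range $1\le \ell\le k$. There is nothing further to prove here.

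For the second assertion, the backward implication is immediate: since $k$ divides $k$, the hypothesis that $f$ be weakly $\ell$-superirreducible for all $\ell\mid k$ already includes the case $\ell=k$. The content therefore lies in the forward implication, which I would prove by contraposition. Suppose $\ell\mid k$, write $m=k/\ell$, and suppose $f$ fails to be weakly $\ell$-superirreducible. Then there is a polynomial $g\in R[t]$ of degree $\ell$ for which $f(g(t))$ factors nontrivially over the field of fractions $F$, say $f(g(t))=A(t)B(t)$ with $A,B\in F[t]$ both nonconstant. I would then substitute $t\mapsto t^m$ and consider $g(t^m)\in R[t]$, which has degree $\ell m=k$, together with the induced factorization $f(g(t^m))=A(t^m)B(t^m)$.

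The one point requiring a moment's care — and the closest thing to an obstacle — is to confirm that this induced factorization is genuinely nontrivial: since $m\ge 1$ and $A,B$ are nonconstant, the polynomials $A(t^m)$ and $B(t^m)$ have degrees $m\deg A$ and $m\deg B$ respectively, both positive, so neither is a constant. Hence $f(g(t^m))$ is reducible while $g(t^m)$ has degree $k$, contradicting weak $k$-superirreducibility. This establishes the forward implication and completes the proof.
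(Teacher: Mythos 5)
Your proposal is correct and follows essentially the same route as the paper: the only nontrivial implication is handled by the same substitution $t\mapsto t^m$ with $k=\ell m$, which the paper states directly ($f(g(t^m))$ irreducible, hence $f(g(t))$ irreducible) and you phrase contrapositively. Your explicit check that $A(t^m)$ and $B(t^m)$ remain nonconstant just spells out the step the paper leaves implicit.
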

\begin{proof}
    All of the implications follow formally from the definitions except for the statement that, if $f(x)$ is weakly $k$-superirreducible and $\ell|k$, then $f(x)$ is weakly $\ell$-superirreducible. To prove this, write $k=\ell m$ and consider a polynomial $g(t)$ of degree $\ell$. The substitution $f(g(t^m))$ is thus irreducible, and hence so is $f(g(t))$.
\end{proof}
It follows that ``$2$-superirreducible'' and ``weakly $2$-superirreducible'' are synonyms.

\section{Counting $2$-superirreducible polynomials over finite fields}
Recall that when $1\le k<d$, we write $s_k(q,d)$ for the number of monic weakly 
$k$-superirreducible polynomials lying in $\F_q[t]$ having degree $d$. 
In particular, the concluding remark of the previous section shows that $s_2(q,d)$ is the number 
of monic $2$-superirreducible polynomials in $\F_q[t]$ having degree $d$. Our goal in this 
section is to establish formulae for $s_2(q,d)$ that deliver the conclusions recorded in 
\Cref{theorem1.1}.

\subsection{Elementary cases}
We begin by confirming that when $q$ is a power of $2$, and also when $d$ is odd, one 
has $s_2(q,d)=0$. In fact, rather more is true, as we now demonstrate.

\begin{prop}\label{char2}
Let $p$ be a prime. Then for all natural numbers $\ell$ and $d$, one has $s_p(p^\ell,d)=0$.
\end{prop}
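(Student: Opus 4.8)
The plan is to exhibit, for an arbitrary monic polynomial $f\in\F_{p^\ell}[t]$ of degree $d$, a single substitution $g$ of degree $p$ for which $f(g(t))$ is reducible. Since this shows that no monic $f$ of degree $d$ is weakly $p$-superirreducible, it will follow immediately that $s_p(p^\ell,d)=0$. Note that we need not assume $f$ to be irreducible: the count $s_p(p^\ell,d)$ is taken over all monic $f$, and we will rule out every one of them with the same substitution.

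The substitution that does the work is $g(t)=t^p$, which has degree exactly $p$ as required. The key structural fact is that $\F_{p^\ell}$ is a finite field of characteristic $p$, so its Frobenius endomorphism $x\mapsto x^p$ is bijective; hence every coefficient of $f$ possesses a (unique) $p$-th root in $\F_{p^\ell}$. Writing $f(t)=\sum_{i=0}^d a_i t^i$ and choosing $b_i\in\F_{p^\ell}$ with $b_i^p=a_i$, the ``freshman's dream'' identity in characteristic $p$ gives
\[
\Big(\sum_{i=0}^d b_i t^i\Big)^p=\sum_{i=0}^d b_i^p t^{ip}=\sum_{i=0}^d a_i t^{ip}=f(t^p).
\]
Thus $f(t^p)=h(t)^p$, where $h(t)=\sum_{i=0}^d b_i t^i$ has degree $d\ge 1$. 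Since $p\ge 2$, the right-hand side is a proper $p$-th power and is therefore reducible in $\F_{p^\ell}[t]$, whence $f$ is not weakly $p$-superirreducible. Letting $f$ range over all monic polynomials of degree $d$ then shows that there are none to count, so $s_p(p^\ell,d)=0$.

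There is essentially no obstacle here beyond identifying the right substitution; the only point requiring care is the surjectivity of Frobenius, which holds precisely because $\F_{p^\ell}$ is finite. Over an imperfect field of characteristic $p$ a coefficient $a_i$ need not be a $p$-th power, and the displayed identity would fail. As an alternative one could invoke \Cref{lem:extension}: taking $f$ irreducible with root $\alpha$ and $L=\F_{p^\ell}(\alpha)$ again a finite field, the polynomial $g(t)-\alpha=t^p-\alpha$ equals $(t-\alpha^{1/p})^p$ in $L[t]$ and so is reducible, yielding the same conclusion. The direct computation above is cleaner, however, and has the advantage of not requiring $f$ to be irreducible.
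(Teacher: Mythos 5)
Your proof is correct and is essentially the paper's own argument: the paper also uses the substitution $g(t)=t^p$ and writes $f(t^p)=\bigl(\sum_j a_j^{p^{\ell-1}}t^j\bigr)^p$, exploiting $a_j=a_j^{p^\ell}$ to extract $p$-th roots of the coefficients, which is just your Frobenius-bijectivity step made explicit. Your remark that surjectivity of Frobenius is the crucial hypothesis (failing over imperfect fields) is a fair observation but does not change the substance.
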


\begin{proof}
Consider a polynomial $f\in \F_{p^\ell}[t]$ having degree $d$. Write 
$f(x)=\sum_{j=0}^d a_jx^j$, and note that $a_j=a_j^{p^\ell}$ for each index $j$. Thus, 
we have
\begin{equation*}
f(t^p)=\sum_{j=0}^d a_j^{p^\ell}t^{pj}=
{\left(\sum_{j=0}^d a_j^{p^{\ell-1}} t^j\right)\!}^p,
\end{equation*}
and it follows that $f(x)$ is not weakly $p$-superirreducible. Consequently, one has 
$s_p(p^\ell,d)=0$.
\end{proof}

The special case $p=2$ of \Cref{char2} shows that $s_2(q,d)=0$ when $q$ is a power of $2$. 
Next, we turn to polynomials of odd degree over $\F_q$.

\begin{prop}\label{oddd}
When $d$ is an odd natural number, one has $s_2(q,d)=0$.
\end{prop}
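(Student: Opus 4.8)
The plan is to show that every monic irreducible polynomial of odd degree $d$ over $\F_q$ fails to be weakly $2$-superirreducible; since a $2$-superirreducible polynomial is in particular irreducible, this immediately yields $s_2(q,d)=0$. By the case $p=2$ of \Cref{char2} I may assume $q$ is odd, and as $s_2(q,d)$ counts monic polynomials with $d\geq 3$, I fix a monic irreducible $f\in\F_q[t]$ of odd degree $d\geq 3$, choose a root $\alpha$, and set $L=\F_q(\alpha)\cong\F_{q^d}$. By \Cref{lem:extension} it suffices to exhibit a degree-$2$ polynomial $g\in\F_q[t]$ for which $g(t)-\alpha$ is reducible over $L$. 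I would search for $g$ of the simple shape $g(t)=ut^2+s$ with $u\in\F_q^\ast$ and $s\in\F_q$, so that $g(t)-\alpha=ut^2-(\alpha-s)$; since $q$ is odd, this quadratic splits over $L$ precisely when $u(\alpha-s)$ is a square in $\F_{q^d}$, the candidate roots differing from $u(\alpha-s)$ only by the square factor $u^2$.

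The crux is then to decide when $u(\alpha-s)$ is a square in $\F_{q^d}$, and here I would invoke the classical criterion that $\beta\in\F_{q^d}^\ast$ is a square in $\F_{q^d}$ if and only if $N_{L/\F_q}(\beta)$ is a square in $\F_q$. This follows at once from the cyclic structure of $\F_{q^d}^\ast$: writing $\beta=\zeta^k$ for a generator $\zeta$, the element $\beta$ is a square exactly when $k$ is even, while $N_{L/\F_q}(\beta)=\zeta^{k(q^d-1)/(q-1)}$ is a power of the generator $\zeta^{(q^d-1)/(q-1)}$ of $\F_q^\ast$ and so is a square in $\F_q$ under the very same parity condition on $k$. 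Applying this with $\beta=u(\alpha-s)$ and using $f(x)=\prod_{i=0}^{d-1}\bigl(x-\alpha^{q^i}\bigr)$, I compute
\[
N_{L/\F_q}\bigl(u(\alpha-s)\bigr)=u^d\prod_{i=0}^{d-1}\bigl(\alpha^{q^i}-s\bigr)=(-1)^d u^d f(s)=-u^d f(s),
\]
the final equality using that $d$ is odd.

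To finish, I would exploit the freedom in the parameters. Because $f$ is irreducible of degree at least $2$, it has no root in $\F_q$, so $f(s)\neq 0$ for every $s\in\F_q$; fix any such $s$. Writing $\chi$ for the quadratic character of $\F_q^\ast$, one has $\chi\bigl(-u^d f(s)\bigr)=\chi(-1)\chi(u)^d\chi(f(s))=\chi(-1)\chi(u)\chi(f(s))$, where the last step uses that $d$ is odd. Since $\chi$ is surjective onto $\{\pm1\}$, I may choose $u\in\F_q^\ast$ with $\chi(u)=\chi(-1)\chi(f(s))$, which forces $-u^d f(s)$ to be a nonzero square in $\F_q$. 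Then $u(\alpha-s)$ is a square in $\F_{q^d}$, the quadratic $g(t)-\alpha$ splits over $L$, and by \Cref{lem:extension} the composition $f(g(t))$ is reducible, so $f$ is not weakly $2$-superirreducible.

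The one genuinely load-bearing ingredient is the norm–square criterion, and the precise place where oddness of $d$ is indispensable is the identity $\chi(u^d)=\chi(u)$, which is what lets the leading coefficient $u$ freely adjust the square class of the norm. I expect this to be the main obstacle to keep honest: for even $d$ the exponent $u^d$ is always a square and this freedom evaporates (consistent with $s_2(q,d)$ being nonzero in that regime), so the argument must—and does—use the parity of $d$ at exactly this step.
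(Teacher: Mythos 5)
Your proof is correct, and while it shares the paper's skeleton, it implements the key step differently. Both arguments reduce the problem, via \Cref{lem:extension} (after the same appeal to \Cref{char2} to assume $q$ odd), to exhibiting a quadratic $g\in\F_q[t]$ for which $g(t)-\alpha$ acquires a root in $\F_{q^d}$. The paper does this by a two-case analysis: if $\alpha$ is a square in $\F_{q^d}$, take $g(t)=t^2$; otherwise take $g(t)=bt^2$ for a nonsquare $b\in\F_q^\times$, using that a nonsquare of $\F_q$ remains a nonsquare in the odd-degree extension $\F_{q^d}$, so that $b^{-1}\alpha$ is a square. You instead work with $g(t)=ut^2+s$ and decide squareness through the norm criterion $\chi_{q^d}=\chi_q\circ N_{\F_{q^d}/\F_q}$ combined with the identity $N_{\F_{q^d}/\F_q}(\alpha-s)=(-1)^d f(s)$, then adjust the square class by choosing $u$, oddness of $d$ entering only through $\chi_q(u^d)=\chi_q(u)$. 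The two mechanisms are at bottom equivalent --- your freedom in $u$ plays exactly the role of the paper's choice between $1$ and $b$, and the shift $s$ is actually unnecessary (taking $s=0$ works, since $f(0)\neq 0$ for $f$ irreducible of degree $d\geq 3$) --- but your computation buys more: an explicit criterion, $\chi_q(-uf(s))=1$, describing precisely which quadratics $ut^2+s$ split $f$; moreover, the identity $\chi_{q^d}=\chi_q\circ N$ is the same fact that underlies the restriction behaviour of $\chi_{q^d}$ to subfields exploited later in \Cref{prop:exactcount}, so your route connects this proposition more directly to the counting argument. The paper's case split is shorter and more elementary; yours is more uniform and quantitatively sharper, and you correctly isolate the unique point where the parity of $d$ is indispensable (your norm--square criterion itself, as you note, holds for every $d$).
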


\begin{proof}
In view of the case $p=2$ of \Cref{char2}, there is no loss of generality in assuming that 
$q$ is odd. Let $f(x)\in \F_q[x]$ be a monic irreducible polynomial of degree $d$. The 
polynomial $f$ has a root $\alpha$ lying in $\F_{q^d}$, and $\F_{q^d}=\F_q(\alpha)$. By 
virtue of \Cref{lem:extension}, if we are able to find a quadratic polynomial 
$g(t)\in \F_q[t]$ having the property that $g(t)-\alpha$ has a root in $\F_{q^d}$, then we 
may infer that $f(g(t))$ is reducible. This will confirm that $f(x)$ is not 
$2$-superirreducible, delivering the desired conclusion.\par 

We may divide into two cases:

\begin{itemize}
\item[(a)] Suppose first that $\alpha=\beta^2$ for some $\beta\in\F_{q^d}$. Then we put 
$g(t)=t^2$ and observe that the polynomial $g(t)-\alpha$ has the root $\beta\in \F_{q^d}$.
\item[(b)] In the remaining cases, we may suppose that $\alpha$ is not the square of any 
element of $\F_{q^d}$. Since $q\ne 2$, there exists an element $b\in \F_q$ which is not the 
square of any element of $\F_q$. On recalling our assumption that $d$ is odd, we find that 
$b$ is not the square of any element in $\F_{q^d}$. Thus, we may infer that 
$b^{-1}\alpha=\beta^2$ for some $\beta\in \F_{q^d}$. We now put $g(t)=bt^2$ and observe 
that the polynomial $g(t)-\alpha$ has the root $\beta\in \F_{q^d}$.
\end{itemize}

In either case, our previous discussion shows that $f(x)$ is not
$2$-superirreducible, and this implies the desired conclusion.
\end{proof}

The conclusion of \Cref{oddd} combines with that of \Cref{char2} to confirm the first 
assertion of \Cref{theorem1.1}. These cases of \Cref{theorem1.1} help to explain the example 
noted in the introduction demonstrating that weak $(k-1)$-superirreducibility is not 
necessarily inherited from the corresponding property of weak $k$-superirreducibility. 
Expanding a little on that example, we observe that by making use of commonly available 
computer algebra packages, one finds the following examples of polynomials weakly 
$3$-superirreducible over $\F_2[x]$ yet not $2$-superirreducible over $\F_2[x]$:
\begin{align*}
    x^6+x^5+x^3+x^2\;&+1,\\
    x^8+x^6+x^5+x^3\;&+1,\\
    x^{10}+x^9+x^7+x^2\;&+1,\\
    x^{10}+x^9+x^8+x^4+x^3+x^2\;&+1,\\
    x^{10}+x^9+x^7+x^6+x^5+x^4+x^3+x^2\;&+1.
\end{align*}
In each of these examples of a polynomial $f\in \F_2[x]$, the failure of 
$2$-superirreducibility follows from \Cref{char2}. Meanwhile, a direct computation 
confirms that the polynomial $f(g(t))$ is irreducible over $\F_2[t]$ for each of the $8$ 
possible monic cubic polynomials $g(t)$ lying in $\F_2[t]$. No analogous odd degree examples 
are available, of course, by virtue of \Cref{oddd}, though examples of larger even degrees 
are not too difficult to identify. 

\subsection{Heuristics}\label{ssec:heuristic}
We next address the problem of determining a formula for the number $s_k(q,d)$ of monic 
weakly $k$-superirreducible polynomials of degree $d$ over $\F_q$. The simplest situation 
here with $k=1$ is completely resolved by celebrated work of Gauss, since 
$1$-superirreducibility is equivalent to irreducibility. Thus, as is well-known, it follows 
from Gauss \cite[p.~602]{gauss} that
\begin{equation*}
s_1(q,d)=\frac{1}{d}\sum_{e|d}\mu \biggl(\frac{d}{e}\biggr) q^{e},
\end{equation*}
whence, as $d\rightarrow \infty$, one has the asymptotic formula
\begin{equation*}
s_1(q,d)=\frac{q^d}{d}+O\biggl( \frac{1}{d}q^{d/2}\biggr) .
\end{equation*}
The corresponding situation with $k\geq 2$ is more subtle. We now motivate our proof 
of an asymptotic formula for $s_2(q,d)$ with a heuristic argument that addresses the 
cases remaining to be considered, namely those where $d$ is even and $q$ is odd. The 
heuristic argument is based on the following lemma, which will also be used in the proof.

\begin{lem}\label{lem:quadshifts}
Let $q$ be an odd prime power, and let $f(x)\in \F_q[x]$ be a monic irreducible polynomial 
of even degree $d$. Let $\alpha \in \F_{q^d}$ be a root of $f(x)$. The polynomial $f(x)$ is 
$2$-superirreducible if and only if $\alpha+c$ is not a square in $\F_{q^d}$ for all 
$c\in \F_q$. 
\end{lem}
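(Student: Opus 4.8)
The plan is to apply \Cref{lem:extension} with $K=\F_q$ and $L=\F_q(\alpha)=\F_{q^d}$, which reduces the $2$-superirreducibility of $f$ to a statement about quadratics over $\F_{q^d}$. By that lemma, $f(g(t))$ is irreducible over $\F_q$ if and only if $g(t)-\alpha$ is irreducible over $\F_{q^d}$, so $f$ is $2$-superirreducible precisely when $g(t)-\alpha$ is irreducible in $\F_{q^d}[t]$ for every degree-two $g(t)=at^2+bt+c'$ with $a\in\F_q^\times$ and $b,c'\in\F_q$.

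First I would record that, in odd characteristic, a quadratic is reducible if and only if it has a root, which happens if and only if its discriminant is a square. Applying this to $g(t)-\alpha=at^2+bt+(c'-\alpha)$, whose discriminant equals $b^2-4a(c'-\alpha)=4a\alpha+(b^2-4ac')$, I would observe that $f$ fails to be $2$-superirreducible exactly when some such discriminant is a square in $\F_{q^d}$. Next I would simplify this family: for each fixed $a\in\F_q^\times$, the quantity $b^2-4ac'$ ranges over all of $\F_q$ as $b,c'$ do, so writing the discriminant as $4a\alpha+\beta$ with $\beta\in\F_q$ arbitrary and factoring out $4a$ gives $4a(\alpha+c)$, where $c=\beta/(4a)$ again ranges over all of $\F_q$. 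Thus $f$ is $2$-superirreducible if and only if $4a(\alpha+c)$ is a non-square in $\F_{q^d}$ for all $a\in\F_q^\times$ and $c\in\F_q$.

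The crux, and the point at which the hypothesis that $d$ is even is used, is the claim that every element of $\F_q^\times$ is a square in $\F_{q^d}$. I would prove this by noting that $x\in\F_q^\times$ is a square in $\F_{q^d}$ if and only if $x^{(q^d-1)/2}=1$, and that $(q^d-1)/(q-1)=1+q+\cdots+q^{d-1}$ is a sum of $d$ odd terms, hence even when $d$ is even; writing it as $2m$ yields $x^{(q^d-1)/2}=(x^{q-1})^m=1$. Granting this, $4a$ is a square in $\F_{q^d}$, so $4a(\alpha+c)$ is a square if and only if $\alpha+c$ is, and the condition collapses (independently of $a$) to the assertion that $\alpha+c$ is a non-square in $\F_{q^d}$ for every $c\in\F_q$, which is exactly the desired characterization.

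The main obstacle to watch is the role of non-monic substitutions: it is precisely the freedom in the leading coefficient $a$ that forces one to know $4a$ is always a square, and hence that requires $d$ to be even. This is in harmony with \Cref{oddd}, where for odd $d$ the presence of a non-square leading coefficient imposes extra incompatible conditions and rules out $2$-superirreducibility altogether. Everything else is a routine translation through \Cref{lem:extension} and the discriminant criterion.
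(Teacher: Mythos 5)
Your proof is correct and follows essentially the same route as the paper: both reduce via \Cref{lem:extension} to the irreducibility of quadratics $g(t)-\alpha$ over $\F_{q^d}$ and then use the key fact that, for even $d$, every element of $\F_q^\times$ is a square in $\F_{q^d}$ to collapse the family to the condition that $\alpha+c$ is a non-square. The only cosmetic difference is that you normalize the quadratic family via the discriminant while the paper completes the square by an additive shift, and you supply an explicit proof of the squares-in-$\F_{q^d}$ fact that the paper merely asserts.
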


\begin{proof}
As a consequence of \Cref{lem:extension}, the polynomial $f(x)$ is 
$2$-superirreducible in $\F_q[x]$ if and only if $g(t)-\alpha$ is irreducible in 
$\F_{q^d}[t]$ for all quadratic polynomials $g\in \F_q[t]$. Since this condition is 
invariant under all additive shifts mapping $t$ to $t+v$, for $v\in \F_q$, it suffices to 
consider only the quadratic polynomials of the shape $g(t)=at^2-b$, with $a,b\in \F_q$. 
Moreover, the assumption that $d$ is even ensures that $a$ is a square in $\F_{q^d}$, and 
hence we may restrict our attention further to polynomials of the shape $g(t)=t^2-c$ with 
$c\in \F_q$. So $f(x)$ is $2$-superirreducible if and only if the equation 
$t^2-c=\alpha$ has no solution in $\F_{q^d}$ for any $c\in\F_q$.
\end{proof}

For heuristic purposes, we now model the behaviour of these elements $\alpha+c$ as if they 
are randomly distributed throughout $\F_{q^d}$. Since roughly half the elements of 
$\F_{q^d}$ are squares, one should expect that the condition that $\alpha+c$ is not a 
square is satisfied for a fixed choice of $c$ with probability close to $\frac{1}{2}$. 
Treating the conditions for varying $c\in \F_q$ as independent events, we therefore expect 
that $f(x)$ is $2$-superirreducible with probability close to $1/2^q$. Multiplying 
this probability by the number of choices for monic irreducible polynomials $f(x)$ of 
degree $d$, our heuristic predicts that when $d$ is even and $q$ is odd, one should have
\begin{equation*}
s_2(q,d)\approx \frac{q^d}{d 2^q}.
\end{equation*}
We shall see in the next subsection that this heuristic accurately predicts the asymptotic 
behaviour of $s_2(q,d)$ as $d\to \infty$ through even integers $d$.

\subsection{The large $d$ limit}\label{ssec:larged}
The asymptotic formula predicted by the heuristic described in the previous subsection will 
follow in the large $d$ limit from Weil's resolution of the Riemann hypothesis for curves 
over finite fields. We make use, specifically, of the Weil bound for certain higher 
autocorrelations of the quadratic character generalizing Jacobi sums. Our goal in this 
subsection is the proof of the estimate for $s_2(q,d)$ supplied by the following theorem, 
an immediate consequence of which is the asymptotic formula \eqref{1.1} supplied by \Cref{theorem1.1}.

\begin{thm}\label{thm:larged}
When $q$ is odd and $d$ is even, one has
\begin{equation*}
\left| s_2(q,d)-\frac{q^d}{d 2^q} \right| < \frac{q}{2d}q^{d/2}.
\end{equation*}
\end{thm}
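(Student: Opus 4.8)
The plan is to convert the count into a character sum and estimate it with the Weil bound. By \Cref{lem:quadshifts}, a monic irreducible polynomial $f$ of even degree $d$ is $2$-superirreducible exactly when, for a root $\alpha\in\F_{q^d}$, each shift $\alpha+c$ with $c\in\F_q$ is a non-square in $\F_{q^d}$. Since $c^q=c$, this condition is preserved by the Frobenius map $\alpha\mapsto\alpha^q$ and so is constant on Galois orbits; as each monic irreducible of degree $d$ has a single orbit of $d$ conjugate roots, all of exact degree $d$, we obtain $s_2(q,d)=M_d/d$, where $M_d$ counts those $\alpha\in\F_{q^d}$ of exact degree $d$ for which every $\alpha+c$ is a non-square. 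It thus suffices to show $|M_d-q^d/2^q|<\tfrac q2\,q^{d/2}$.

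To count $M_d$, let $\chi$ denote the quadratic character on $\F_{q^d}$, extended by $\chi(0)=0$. Any $\alpha$ of degree at least $2$ satisfies $\alpha+c\neq 0$ for all $c\in\F_q$, so the indicator that all its shifts are non-squares is $P(\alpha):=\prod_{c\in\F_q}\tfrac12\bigl(1-\chi(\alpha+c)\bigr)$, which equals $1$ when the condition holds and $0$ otherwise. Expanding the product and interchanging sums gives
$$\sum_{\alpha\in\F_{q^d}}P(\alpha)=\frac{1}{2^q}\sum_{S\subseteq\F_q}(-1)^{|S|}\sum_{\alpha\in\F_{q^d}}\chi\Bigl(\prod_{c\in S}(\alpha+c)\Bigr).$$
The empty set contributes the main term $q^d/2^q$. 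For each nonempty $S$ the polynomial $\prod_{c\in S}(\alpha+c)$ is squarefree of positive degree, hence not a constant multiple of a square, so the Weil bound for character sums gives $\bigl|\sum_\alpha\chi(\cdots)\bigr|\le(|S|-1)q^{d/2}$. Since $\sum_{\emptyset\neq S\subseteq\F_q}(|S|-1)=q2^{q-1}-2^q+1$, I would conclude that
$$\Bigl|\sum_{\alpha\in\F_{q^d}}P(\alpha)-\frac{q^d}{2^q}\Bigr|\le\Bigl(\frac q2-1+\frac{1}{2^q}\Bigr)q^{d/2}.$$

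It remains to pass from the sum over all of $\F_{q^d}$ to the sum over elements of exact degree $d$, that is, to bound $E':=\sum_{\alpha}P(\alpha)-M_d$, the contribution of the proper subfields $\F_{q^e}$ with $e\mid d$ and $e<d$. The crucial point — and the step I expect to demand the most care — is that whenever $d/e$ is even, every nonzero element of $\F_{q^e}$ is already a square in $\F_{q^d}$: indeed $(q^d-1)/(q^e-1)=1+q^e+\cdots+q^{(d/e-1)e}$ is a sum of $d/e$ odd terms, hence even, so $q^e-1$ divides $(q^d-1)/2$. Consequently $P$ vanishes identically on every such $\F_{q^e}$, in particular on the largest subfield $\F_{q^{d/2}}$ and on $\F_q$ itself. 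Only subfields with $d/e$ odd and $e<d$ survive, forcing $e\le d/3$; hence $E'$ is at most the number of elements of degree at most $d/3$, which is $O(q^{d/3})$ and is strictly smaller than $(1-2^{-q})q^{d/2}$ for every even $d$ (the cases $d=2,4$ giving $E'=0$ outright).

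Combining the last two estimates yields $|M_d-q^d/2^q|\le(\tfrac q2-1+2^{-q})q^{d/2}+E'<\tfrac q2\,q^{d/2}$, and dividing by $d$ gives the claimed bound on $s_2(q,d)$. The principal obstacle is this final subfield bookkeeping: the elimination of the $\F_{q^{d/2}}$ contribution via the parity of $d/e$ is exactly what keeps $E'$ of lower order than $q^{d/2}$ and preserves the strict inequality, since without it $E'$ could itself be comparable to $q^{d/2}$.
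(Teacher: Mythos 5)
Your proof is correct, and its engine is the same as the paper's: the quadratic-character indicator $\prod_{c\in\F_q}\tfrac12(1-\chi(\alpha+c))$ from \Cref{lem:quadshifts}, expansion into the autocorrelation sums $\sum_\alpha \chi\bigl(\prod_{c\in S}(\alpha+c)\bigr)$, and the Weil--Schmidt bound $(|S|-1)q^{d/2}$ for squarefree shifted products (the paper's \Cref{lem:aasymp}, citing \cite{Sch1976}); your count $\sum_{n=1}^q\binom qn(n-1)=q2^{q-1}-2^q+1$ is exactly the paper's binomial computation. Where you genuinely diverge is the subfield bookkeeping. The paper runs a M\"obius inversion over divisors $e\mid d$, uses the restriction of $\chi_{q^d}$ to $\F_{q^e}$ (trivial when $d/e$ is even, equal to $\chi_{q^e}$ when $d/e$ is odd) to obtain the \emph{exact} formula of \Cref{prop:exactcount} --- a Gauss-type identity highlighted as a result in its own right --- and then applies the Weil bound separately at each level $e$ with $d/e$ odd. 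You instead apply Weil only at the top level $\F_{q^d}$ and subtract the subfield contribution $E'$ by brute-force counting, with your parity observation ($(q^d-1)/(q^e-1)$ even when $d/e$ is even, so $\F_{q^e}^\times$ consists of squares in $\F_{q^d}$ and the indicator vanishes there) doing the same job as the paper's character-restriction step: it is precisely what eliminates the otherwise fatal $\F_{q^{d/2}}$ contribution of size $q^{d/2}$ and forces $e\le d/3$. Your sketched bound $E'<(1-2^{-q})q^{d/2}$ does check out: for $d\in\{2,4\}$ every proper subfield has $d/e$ even so $E'=0$, and for even $d\ge 6$ one has $E'\le\sum_{e\le d/3}q^e<\tfrac{q}{q-1}q^{d/3}\le\tfrac32 q^{d/3}$, which is below $\tfrac78 q^{d/2}\le(1-2^{-q})q^{d/2}$ since $q^{d/6}\ge 3$; it would be worth writing this verification out rather than asserting it. The trade-off: the paper's route yields the exact count (used elsewhere and of independent interest) at the cost of the inversion machinery, while yours is leaner and suffices for the stated inequality, at the cost of proving only the asymptotic estimate.
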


The proof of this estimate is based on a more rigorous version of the heuristic argument given 
in \Cref{ssec:heuristic}, and it employs character sums that we now define.

\begin{defn}\label{autocor}
Let $q$ be an odd prime power, and write $\chi_q$ for the nontrivial quadratic character 
$\chi_q : \F_q^\times \to \{1,-1\}$, extended to $\F_q$ by setting $\chi_q(0)=0$. 
We define the \textit{order $n$ autocorrelation of $\chi_q$} with offsets 
$u_1,\ldots,u_n \in \F_q$ to be the sum
\begin{equation*}
a_q(u_1,\ldots,u_n)=\sum_{\beta \in \F_q} \chi_q(\beta+u_1)\cdots\chi_q(\beta+u_n).
\end{equation*}
Noting that this definition is independent of the ordering of the arguments, when 
$U=\{u_1,\ldots ,u_n\}$ is a subset of $\F_q$, we adopt the convention of writing $a_q(U)$ for 
$a_q(u_1,\ldots ,u_n)$.
\end{defn}

Note that $a_q(U)\in \Z$ for all subsets $U$ of $\F_q$. When $\abs{U}=1$ it is apparent 
that $a_q(U)=0$. Meanwhile, in circumstances where $\abs{U}=2$, so that $U=\{u_1,u_2\}$ for 
some elements $u_1,u_2\in \F_q$ with $u_1\neq u_2$, the autocorrelation 
$a_q(U)=a_q(u_1,u_2)$ is a 
quadratic Jacobi sum. Thus, in this situation, we have $a_q(u_1,u_2)=\pm 1$; see 
\cite[Chapter 8]{irelandrosen}. The higher-order correlations become more complicated, but 
we will see that they can easily be bounded. First, we relate the autocorrelations of 
$\chi_q$ to the number $s_2(q,d)$ of monic $2$-superirreducible polynomials of 
degree $d$ in $\F_q[x]$.

\begin{prop}\label{prop:exactcount}
Let $q$ be an odd prime power and $d$ be even. Then
\begin{equation*}
s_2(q,d)=\frac{1}{d 2^q}\sum_{\substack{e|d \\ {\text{$d/e$ {\rm odd}}}}} 
\mu\Bigl( \frac{d}{e}\Bigr)\biggl(q^{e} +\sum_{\emptyset \neq U\subseteq \F_q}
(-1)^{\abs{U}}a_{q^{e}}(U)\biggr).
\end{equation*}
\end{prop}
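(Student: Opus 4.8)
The plan is to count $2$-superirreducible polynomials by counting the roots they contribute in $\F_{q^d}$, and then to extract that count from a product of quadratic characters whose expansion produces the autocorrelations $a_{q^e}(U)$. By \Cref{lem:quadshifts}, a monic irreducible $f$ of even degree $d$ over an odd field $\F_q$ is $2$-superirreducible precisely when a root $\alpha\in\F_{q^d}$ satisfies $\chi_{q^d}(\alpha+c)=-1$ for every $c\in\F_q$; since $\chi_{q^d}(\alpha^q+c)=\chi_{q^d}(\alpha+c)$ for $c\in\F_q$, this property is Galois-invariant and hence depends only on $f$. Each such $f$ contributes exactly its $d$ distinct conjugate roots, all of degree $d$ over $\F_q$, so I would first record that $d\,s_2(q,d)=M_d$, where $M_f$ denotes the number of $\alpha\in\F_{q^f}$ of degree exactly $f$ over $\F_q$ for which $\alpha+c$ is a nonzero non-square in $\F_{q^f}$ for all $c\in\F_q$.

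The arithmetic heart is a character-sum identity. For a field $\F_{q^e}$ I would consider
\[
S_{q^e}=\sum_{\alpha\in\F_{q^e}}\prod_{c\in\F_q}\frac{1-\chi_{q^e}(\alpha+c)}{2},
\]
and expand the product as $\sum_{U\subseteq\F_q}(-1)^{\abs U}\prod_{u\in U}\chi_{q^e}(\alpha+u)$. Summing over $\alpha$ and recognizing the inner sums as the autocorrelations of \Cref{autocor} (the term $U=\emptyset$ contributing $q^e$) yields $2^q S_{q^e}=q^e+\sum_{\emptyset\neq U\subseteq\F_q}(-1)^{\abs U}a_{q^e}(U)$. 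The next step is to identify $S_{q^e}$ with an honest count $B_{q^e}$ of good elements: for $\alpha\notin\F_q$ every factor lies in $\{0,1\}$, so the product is exactly the indicator that $\chi_{q^e}(\alpha+c)=-1$ for all $c$, while for $\alpha\in\F_q$ the factor at $c=1-\alpha$ equals $\tfrac12(1-\chi_{q^e}(1))=0$ and kills that contribution. Hence $S_{q^e}=B_{q^e}$ counts exactly the $\alpha\in\F_{q^e}$ (necessarily outside $\F_q$) with $\alpha+c$ a non-square for all $c\in\F_q$.

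It then remains to disentangle the contributions of elements of each degree, and this is where the parity condition $d/e$ odd enters. For $x\in\F_{q^f}^\times$ one has $\chi_{q^e}(x)=\chi_{q^f}(x)^{(q^e-1)/(q^f-1)}$, and since $(q^e-1)/(q^f-1)\equiv e/f\pmod 2$ this shows $\chi_{q^e}|_{\F_{q^f}}=\chi_{q^f}$ when $e/f$ is odd, whereas every element of $\F_{q^f}^\times$ becomes a square in $\F_{q^e}$ when $e/f$ is even. Consequently a degree-$f$ element can be good in $\F_{q^e}$ only when $e/f$ is odd, in which case goodness in $\F_{q^e}$ and in $\F_{q^f}$ coincide; this gives $B_{q^e}=\sum_{f\mid e,\ e/f\ \mathrm{odd}}M_f$. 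The final step is a Möbius inversion restricted to odd quotients: substituting into $\sum_{e\mid d,\ d/e\ \mathrm{odd}}\mu(d/e)B_{q^e}$, interchanging the order of summation, and using that $\sum_{g\mid m}\mu(g)=0$ unless $m=1$ collapses the sum to $M_d$. Combining this with $S_{q^e}=B_{q^e}$ and the character-sum identity, and dividing by $d$, gives the stated formula.

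I expect the main obstacle to be the bookkeeping in the last two steps: verifying the parity criterion governing when $\F_{q^f}^\times$ lands inside the squares of $\F_{q^e}$, and then carrying out the restricted Möbius inversion so that precisely the divisors $e$ with $d/e$ odd survive. The character-sum expansion and the vanishing of the $\alpha\in\F_q$ terms are routine by comparison.
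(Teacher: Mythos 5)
Your proposal is correct and takes essentially the same approach as the paper: \Cref{lem:quadshifts}, the indicator product $\prod_{c\in\F_q}\tfrac12\left(1-\chi(\alpha+c)\right)$ expanded into the autocorrelations $a_{q^e}(U)$, the parity fact $(q^e-1)/(q^f-1)\equiv e/f \pmod 2$ governing restriction of the quadratic character to subfields, and a M\"obius argument. The only difference is organizational: the paper encodes ``degree exactly $d$'' by a M\"obius sum over all $e\mid d$ inside which the even-quotient terms vanish because $\chi_{q^d}$ restricts trivially, whereas you decompose $B_{q^e}=\sum_{f\mid e,\ e/f\ \mathrm{odd}}M_f$ and invert over the odd-quotient divisor lattice---the same computation carried out in a different order.
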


\begin{proof}
Consider a monic irreducible polynomial $f(x)\in \F_q[x]$ of degree $d$, and let $\alpha$ 
be a root of $f(x)$ in $\F_{q^d}$. It follows from \Cref{lem:quadshifts} that $f(x)$ is 
$2$-superirreducible if and only if $\alpha+c$ is not a square in $\F_{q^d}$ for 
each $c\in\F_{q}$. Since the latter condition is equivalent to the requirement that 
$\chi_{q^d}(\alpha + c)=-1$ for all $c\in \F_q$, we see that
\begin{equation*}
\prod_{c\in \F_q}\foh \left(1-\chi_{q^d}(\alpha +c)\right)=\begin{cases}1,&
\text{if $f$ is $2$-superirreducible},\\
0,&\text{otherwise}.
\end{cases}
\end{equation*}
This relation provides an algebraic formulation of the indicator function for
$2$-superirreducibility. Instead of summing this quantity over monic irreducible 
polynomials, we can instead sum over elements $\alpha\in\F_{q^d}$ not lying in any proper 
subfield, dividing by $d$ to account for overcounting. Thus, we find that
\begin{equation*}
s_2(q,d)=\frac{1}{d}\sum_{\substack{\alpha \in \F_{q^d}\\ 
\text{$\alpha \nin \F_{q^{e}}$ (\text{$e<d$ and $e|d$})}}} \prod_{c\in \F_q}\foh 
\left(1-\chi_{q^d}(\alpha + c)\right).
\end{equation*}
The condition on $\alpha$ in the first summation of this relation may be encoded using the 
M\"obius function. Thus, we obtain
\begin{equation*}
s_2(q,d)=\frac{1}{d 2^q}\sum_{e|d}\mu\Bigl( \frac{d}{e}\Bigr) \sum_{\alpha \in \F_{q^{e}}} 
\prod_{c\in \F_q}\left( 1-\chi_{q^d}(\alpha + c)\right).
\end{equation*}
When $d/e$ is even, the quadratic character $\chi_{q^d}$ on $\F_{q^d}$ restricts to the 
trivial character on $\F_{q^{e}}$, and when $d/e$ is odd, it instead restricts to 
$\chi_{q^{e}}$. We therefore deduce that
\begin{equation*}
s_2(q,d)=\frac{1}{d2^q}\sum_{\substack{e|d\\ \text{$d/e$ odd}}} \mu\Bigl( \frac{d}{e}\Bigr) 
\sum_{\alpha \in \F_{q^{e}}}\prod_{c\in \F_q}\left(1-\chi_{q^{e}}(\alpha + c)\right),
\end{equation*}
and on observing that
\begin{equation*}
\sum_{\alpha \in \F_{q^{e}}}\prod_{c\in \F_q}\left(1-\chi_{q^{e}}(\alpha + c)\right) 
=q^{e} + \sum_{\emptyset \neq U \subseteq \F_q} (-1)^{\abs{U}} a_{q^{e}}(U),
\end{equation*}
the desired formula for $s_2(q,d)$ follows.
\end{proof}

We next establish a bound on the autocorrelations $a_{q^e}(U)$.

\begin{lem}\label{lem:aasymp}
Let $q$ be an odd prime power. Suppose that $U$ is a non-empty subset of $\F_q$ with 
$\abs{U}=n$. Then for each positive integer $e$, one has 
$\abs{a_{q^e}(U)}\leq (n-1)q^{e/2}$.
\end{lem}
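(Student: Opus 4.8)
The plan is to recognize the autocorrelation $a_{q^e}(U)$ as a one-variable multiplicative character sum against a fixed squarefree polynomial, and then to invoke the Weil bound for such sums. Writing $U=\{u_1,\dots,u_n\}$ and using that $\chi_{q^e}$ is multiplicative, I would first record that
\[
a_{q^e}(U)=\sum_{\beta\in\F_{q^e}}\prod_{u\in U}\chi_{q^e}(\beta+u)
=\sum_{\beta\in\F_{q^e}}\chi_{q^e}\!\left(h(\beta)\right),
\qquad h(x)=\prod_{u\in U}(x+u).
\]
Here $h$ is a monic polynomial in $\F_q[x]\subseteq\F_{q^e}[x]$ of degree $n$ whose roots $-u_1,\dots,-u_n$ are pairwise distinct, since the elements of $U$ are distinct; hence $h$ is squarefree.

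Next I would verify the hypotheses needed to apply the Weil bound. Since $\chi_{q^e}$ is the nontrivial quadratic character of $\F_{q^e}$, it has order $2$, and the relevant condition is that $h$ not be a perfect square in $\F_{q^e}[x]$ (equivalently, not a constant times a square). This holds because $h$ is squarefree of positive degree $n\ge 1$, and a squarefree polynomial of positive degree cannot be a square. The number of distinct roots of $h$ in an algebraic closure of $\F_{q^e}$ is then exactly $n$.

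The main input is the Weil bound for multiplicative character sums: for a nontrivial character $\chi$ of order $m>1$ on a field of $Q$ elements and a polynomial $f$ that is not an $m$-th power and has $s$ distinct roots, one has $\abs{\sum_{x}\chi(f(x))}\le (s-1)Q^{1/2}$. Applying this with $m=2$, $Q=q^e$, $f=h$, and $s=n$ yields immediately
\[
\abs{a_{q^e}(U)}=\Bigl|\sum_{\beta\in\F_{q^e}}\chi_{q^e}(h(\beta))\Bigr|\le (n-1)\,q^{e/2},
\]
as claimed. The genuinely hard content is contained entirely in this Weil estimate, which rests on the Riemann hypothesis for curves over finite fields; the remainder is the purely formal reduction of $a_{q^e}(U)$ to a sum of the shape $\sum_{\beta}\chi_{q^e}(h(\beta))$ together with the observation that $h$ is squarefree.

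As a sanity check I would note the bound is consistent with the earlier remarks: when $n=1$ it forces $a_{q^e}(U)=0$, matching the identity $\sum_{\beta\in\F_{q^e}}\chi_{q^e}(\beta+u)=0$, and when $n=2$ it gives $\abs{a_{q^e}(U)}\le q^{e/2}$, compatible with the Jacobi-sum evaluation $a_q(u_1,u_2)=\pm1$ recorded after \Cref{autocor}.
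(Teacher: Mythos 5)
Your proof is correct and follows essentially the same route as the paper: both rewrite $a_{q^e}(U)$ as $\sum_{\beta\in\F_{q^e}}\chi_{q^e}(h(\beta))$ with $h(t)=\prod_{u\in U}(t+u)$ having $n$ distinct roots, and then apply the Weil bound for multiplicative character sums (the paper cites Schmidt's Theorem 2C'). Your explicit verification that $h$ is squarefree, hence not a square in $\F_{q^e}[x]$, is a hypothesis the paper leaves implicit, so nothing is missing.
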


\begin{proof}
Observe that
\begin{equation*}
a_{q^e}(U)=\sum_{\beta\in\F_{q^e}}\chi_{q^e}(h(\beta)),
\end{equation*}
where $h(t)=(t+u_1)\cdots (t+u_n)$ is a polynomial in $\F_q[t]$ having roots 
$-u_1,\ldots ,-u_n$. Since $u_1,\ldots ,u_n$ are distinct and $\chi_{q^e}$ is a 
multiplicative character of order $2$, it follows from a version of Weil's bound 
established by Schmidt that 
$\abs{a_{q^e}(U)}\leq (n-1)q^{e/2}$; see \cite[Chapter 2, p.~43, Theorem 2C']{Sch1976}.
\end{proof}

Now we complete the proof of \Cref{thm:larged}. In this proof, we expend a little extra 
effort to achieve a more attractive conclusion.

\begin{proof}[Proof of \Cref{thm:larged}]
We begin by observing that, in view of \Cref{lem:aasymp}, one has
\begin{align}
\biggl| \sum_{\emptyset \neq U \subseteq \F_q} (-1)^{\abs{U}} a_{q^{e}}(U)\biggr| 
&\leq \sum_{n=1}^q \binom{q}{n}(n-1)q^{e/2}\notag \\
&=q^{e/2}\biggl( q\sum_{n=2}^q \binom{q-1}{n-1}-\sum_{n=2}^q \binom{q}{n}\biggr) \notag \\
&=q^{e/2}\bigl( q(2^{q-1}-1)-(2^q-q-1)\bigr).\label{tw1}
\end{align}
We note next that since $d$ is assumed to be even, then whenever $e$ is a divisor of $d$ 
with $d/e$ odd, it follows that $e$ is even. Moreover, if it is the case that $e<d$, then $e\le d/3$. 
The first constraint on $e$ here conveys us from \eqref{tw1} to the upper bound
\begin{align*}
\sum_{\substack{e|d\\ \text{$d/e$ odd}}}\abs{ \sum_{\emptyset \neq U \subseteq \F_q} 
(-1)^{\abs{U}} a_{q^{e}}(U)} 
&\leq \bigl( 2^{q-1}(q-2)+1\bigr) \sum_{m=0}^{d/2}q^m \\
&<\frac{q}{q-1}\bigl( 2^{q-1}(q-2)+1\bigr) q^{d/2}.
\end{align*}
Meanwhile, making use also of the second constraint on $e$, we obtain the bound
\begin{equation*}
\sum_{\substack{e|d\\ \text{$e<d$ and $d/e$ odd}}}q^{e}\le \sum_{0\le m\le d/3}q^m< 
\frac{q}{q-1}q^{d/2}.
\end{equation*}
By applying these bounds in combination with \Cref{prop:exactcount}, we deduce that
\begin{equation*}
\abs{s_2(q,d)-\frac{q^d}{d 2^q}}<\frac{1}{d 2^q}
\bigl( (q-1)2^{q-1}-2^{q-1}+2\bigr) \frac{q}{q-1}q^{d/2}\le \frac{q}{2d}q^{d/2}.
\end{equation*}
This completes the proof of \Cref{thm:larged}.
\end{proof}

\subsection{Vanishing in the large $q$ limit}
We turn our attention next to the behaviour of $s_2(q,d)$ when $d$ is fixed and $q$ is 
large. It transpires that $s_2(q,d)=0$ for large enough prime powers $q$. This conclusion 
follows from \Cref{lem:quadshifts} once we confirm that for every primitive element 
$\alpha \in \F_{q^d}$, there exists an element $c\in \F_q$ for which 
$\chi_{q^d}(\alpha+c)=1$.

\begin{lem}\label{lem:largeqcharsum}
Suppose that $q$ is an odd prime power and $\alpha \in \mathbb{F}_{q^d}$ is a primitive 
element. Then, whenever $q>(d-1)^2$, one has
\begin{equation*}
\biggl| \sum_{c\in\F_q}\chi_{q^d}(\alpha+c)\biggr| <q.
\end{equation*}
\end{lem}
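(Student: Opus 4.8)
The plan is to transform the character sum, which at first sight lives over the subfield $\F_q$ of $\F_{q^d}$ yet involves the ``large'' character $\chi_{q^d}$, into a genuine one-variable Weil sum over $\F_q$, to which the bound already used in \Cref{lem:aasymp} applies. The crucial ingredient is the norm relation $\chi_{q^d}=\chi_q\circ N$, where $N=N_{\F_{q^d}/\F_q}$ is the field norm. This follows from the identity $N(\beta)^{(q-1)/2}=\beta^{(q^d-1)/2}$ for $\beta\in\F_{q^d}^\times$, which in turn is immediate from $N(\beta)=\beta^{(q^d-1)/(q-1)}$.

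First I would exploit that $\alpha$ is primitive, so that $\F_q(\alpha)=\F_{q^d}$ and the minimal polynomial $f(x)=\prod_{i=0}^{d-1}\bigl(x-\alpha^{q^i}\bigr)$ is a monic irreducible separable polynomial of degree $d$ in $\F_q[x]$. For $c\in\F_q$ one has $c^{q^i}=c$ for all $i$, so a direct computation gives
\[
N(\alpha+c)=\prod_{i=0}^{d-1}\bigl(\alpha^{q^i}+c\bigr)=(-1)^d f(-c).
\]
Feeding this into the norm relation yields $\chi_{q^d}(\alpha+c)=\chi_q\bigl((-1)^d\bigr)\,\chi_q\bigl(f(-c)\bigr)$, and hence, after replacing $c$ by $-c$ in the summation,
\[
\sum_{c\in\F_q}\chi_{q^d}(\alpha+c)=\chi_q\bigl((-1)^d\bigr)\sum_{c\in\F_q}\chi_q\bigl(f(c)\bigr).
\]
Since $\chi_q\bigl((-1)^d\bigr)\in\{1,-1\}$, the problem reduces to bounding the Weil sum $\bigl|\sum_{c\in\F_q}\chi_q(f(c))\bigr|$.

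Next I would invoke the Weil bound in precisely the form recorded for \Cref{lem:aasymp} (Schmidt's Theorem~2C$'$). Because $f$ is irreducible, it is separable with $d$ distinct roots and is not a constant multiple of the square of a polynomial of positive degree; the quadratic character $\chi_q$ therefore gives $\bigl|\sum_{c\in\F_q}\chi_q(f(c))\bigr|\le (d-1)q^{1/2}$. The hypothesis $q>(d-1)^2$ means $d-1<q^{1/2}$, so $(d-1)q^{1/2}<q^{1/2}\cdot q^{1/2}=q$. Combining the last two displays then gives $\bigl|\sum_{c\in\F_q}\chi_{q^d}(\alpha+c)\bigr|\le (d-1)q^{1/2}<q$, which is the claim.

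The only steps requiring care are routine: the verification of the norm relation and of the factorisation $N(\alpha+c)=(-1)^d f(-c)$, and the check that irreducibility of $f$ supplies exactly the hypotheses (separability, distinct roots, non-squareness) needed for the Weil bound. I expect the main conceptual point, rather than a genuine obstacle, to be the initial recognition that the norm map converts this subfield sum into a standard one-variable character sum; once that reduction is in place, the result drops out of the Weil bound together with the numerical condition $q>(d-1)^2$.
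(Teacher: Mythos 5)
Your proof is correct, but it follows a genuinely different route from the paper's. The paper disposes of this lemma with a one-line citation: it observes that $\chi_{q^d}$ is nontrivial on the $d$-dimensional $\F_q$-algebra $\F_q[\alpha]=\F_{q^d}$ and invokes Wan's Corollary~2.2 (with $\beta=-\alpha$) to get $\bigl|\sum_{c\in\F_q}\chi_{q^d}(\alpha+c)\bigr|\le (d-1)q^{1/2}$, after which the numerical step is the same as yours. You instead prove this intermediate bound from scratch: the norm relation $\chi_{q^d}=\chi_q\circ N_{\F_{q^d}/\F_q}$ (correctly justified via $N(\beta)=\beta^{(q^d-1)/(q-1)}$), the identity $N(\alpha+c)=(-1)^d f(-c)$ with $f$ the degree-$d$ minimal polynomial of $\alpha$ (this is exactly where primitivity is used, matching the paper's hypothesis), and then Schmidt's Theorem~2C$'$ applied to $\sum_{c}\chi_q(f(c))$ --- the very same reference the paper already uses in \Cref{lem:aasymp}, with irreducibility of $f$ supplying squarefreeness and the non-square condition. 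Your checks are all sound; the only pedantic remark is that for $d\ge 2$ one has $f(-c)\neq 0$ for $c\in\F_q$, and even in degenerate cases both sides of $\chi_{q^d}(\alpha+c)=\chi_q((-1)^d)\chi_q(f(-c))$ vanish simultaneously, so nothing breaks. What the two approaches buy: yours makes the lemma self-contained modulo a reference already present in the paper, and in effect reproves the special case of Wan's result needed here (this norm-to-Weil-sum reduction is essentially the standard derivation); the paper's citation is shorter and rests on a statement of greater generality, since Wan's corollary covers arbitrary nontrivial characters and more general $\F_q$-algebras, which could matter for extensions beyond the quadratic-character setting of $2$-superirreducibility.
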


\begin{proof}
Consider the $d$-dimensional commutative $\F_q$-algebra $\F_{q^d}=\F_q[\alpha]$. 
Observe that the character $\chi_{q^d}$ is not trivial on 
$\F_q[\alpha]=\F_{q^d}$. It follows from Wan \cite[Corollary 2.2]{wan}, taking $\beta = -\alpha$ in the notation of that paper, that
\begin{equation*}
\biggl| \sum_{c\in \F_q}\chi_{q^d}(c+\alpha)\biggr| \leq (d-1)q^{1/2}.
\end{equation*}
Provided that $q>(d-1)^2$, one has $(d-1)q^{1/2}<q$, and thus the desired conclusion 
follows.
\end{proof}

We are now equipped to establish the final conclusion of \Cref{theorem1.1}.

\begin{thm}\label{thm:largeq}
Let $d$ be an even integer, and suppose that $q$ is an odd prime power with $q>(d-1)^2$. 
Then $s_2(q,d)=0$.
\end{thm}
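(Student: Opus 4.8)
The plan is to argue by contradiction, converting the defining property of a $2$-superirreducible polynomial into a statement about a character sum that is then ruled out by \Cref{lem:largeqcharsum}. Suppose, contrary to the assertion, that $s_2(q,d)>0$, so that there exists a monic irreducible polynomial $f(x)\in\F_q[x]$ of degree $d$ that is $2$-superirreducible. Let $\alpha\in\F_{q^d}$ be a root of $f$. Since $f$ is irreducible of degree $d$, we have $\F_q(\alpha)=\F_{q^d}$, so $\alpha$ is a primitive element in the sense required by \Cref{lem:largeqcharsum}.

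First I would invoke \Cref{lem:quadshifts}, which (since $q$ is odd and $d$ is even) asserts that the $2$-superirreducibility of $f$ is equivalent to the statement that $\alpha+c$ is a non-square in $\F_{q^d}$ for every $c\in\F_q$. Because $d\ge 2$, the element $\alpha$ does not lie in $\F_q$, and hence $\alpha+c\ne 0$ for every $c\in\F_q$. Consequently each value $\chi_{q^d}(\alpha+c)$ lies in $\{1,-1\}$, and the non-square condition is precisely the requirement that $\chi_{q^d}(\alpha+c)=-1$ for all $c\in\F_q$. Summing over $c$ then forces
\begin{equation*}
\sum_{c\in\F_q}\chi_{q^d}(\alpha+c)=-q,
\end{equation*}
so that the absolute value of this character sum equals $q$.

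This contradicts \Cref{lem:largeqcharsum}: since $\alpha$ is primitive and $q>(d-1)^2$, that lemma supplies the strict inequality $\bigl|\sum_{c\in\F_q}\chi_{q^d}(\alpha+c)\bigr|<q$. The contradiction shows that no $2$-superirreducible polynomial of degree $d$ can exist, whence $s_2(q,d)=0$, as claimed.

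I do not anticipate a genuine obstacle in carrying out this final step, since the substantive analytic input — the Weil-type estimate for the incomplete character sum, obtained via Wan's bound — has already been isolated in \Cref{lem:largeqcharsum}, while the reduction from superirreducibility to the non-square condition has been packaged into \Cref{lem:quadshifts}. The only structural point worth flagging is that $2$-superirreducibility saturates the inequality exactly, driving every summand to $-1$ and pinning the sum at the boundary value $q$; the hypothesis $q>(d-1)^2$ is precisely what excludes this boundary case. The remaining verifications — that $\alpha$ generates $\F_{q^d}$ (immediate from irreducibility) and that $\alpha+c$ never vanishes (immediate from $\alpha\notin\F_q$) — are what license the clean passage from the non-square condition to the character-value equation $\chi_{q^d}(\alpha+c)=-1$, and I expect these to be routine rather than delicate.
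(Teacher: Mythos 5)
Your proposal is correct and follows essentially the same route as the paper's proof: contradiction via \Cref{lem:quadshifts} to force $\chi_{q^d}(\alpha+c)=-1$ for all $c\in\F_q$, pinning the character sum at $-q$, which the bound of \Cref{lem:largeqcharsum} rules out when $q>(d-1)^2$. Your added verifications (that $\alpha$ generates $\F_{q^d}$ and that $\alpha+c\neq 0$, so each character value is genuinely $\pm 1$) are details the paper leaves implicit, and they are correct.
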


\begin{proof}
Suppose that $f(x)\in \F_q[x]$ is a $2$-superirreducible polynomial of degree $d$ 
over $\F_q$, and consider a root $\alpha \in \F_{q^d}$ of $f$. By \Cref{lem:quadshifts}, we 
must have $\chi_{q^d}(\alpha +c)=-1$ for every $c\in\F_q$, and hence
\begin{equation*}
\sum_{c\in \F_q}\chi_{q^d}(\alpha +c)=-q.
\end{equation*}
This contradicts the estimate supplied by \Cref{lem:largeqcharsum}, since we have assumed 
that $q>(d-1)^2$. Consequently, there can be no $2$-superirreducible polynomials of 
degree $d$ over $\F_q$. 
\end{proof}

\section{Relationship to rational and $p$-adic superirreducibility}\label{sec:4}
Fix a rational prime number $p$. Then, any monic polynomial $f\in \Z[x]$ that is 
irreducible modulo $p$ is also irreducible over $\Q[x]$. One might guess that this familiar 
property extends from irreducibility to superirreducibility. Thus, if the monic polynomial 
$f(x)$ reduces to a weakly $k$-superirreducible polynomial modulo $p$, one might expect 
that $f(x)$ is itself weakly $k$-superirreducible over $\Z$, and perhaps also over $\Q$. We 
find that such an expectation is in fact excessively optimistic. Indeed, there are 
$2$-superirreducible polynomials over $\F_3$ with integral lifts that are not 
$2$-superirreducible over $\Z$.

\begin{eg}\label{example4.1}
Consider the polynomial $f(x) \in \Z[x]$ given by
\begin{equation*}
f(x)=x^4 -12x^3 +2x^2 -39x +71.
\end{equation*}
Then, we have $f(x)\equiv x^4 -x^2 -1\Mod{3}$, and it is verified by an exhaustive check 
that $x^4 -x^2 -1$ is $2$-superirreducible in $\F_3[x]$. However, one has
\begin{equation*}
f(3t^2 +t) =(t^4+3t^3+2t^2-1)(81t^4-135t^3-27t^2+39t-71),
\end{equation*}
so that $f(x)$ is not $2$-superirreducible over $\Z$.
\end{eg}

Despite examples like the one above, one may still hope that the assumption of additional 
congruential properties involving higher powers of $p$ might suffice to exclude such 
problematic examples, thereby providing a means to lift superirreducible polynomials over 
$\Z_p$ to superirreducible polynomials over $\Z$. The following proposition reveals a major 
obstruction to any such lifting process, since it shows that for each natural number 
$k\ge 2$, there are no $p$-adic weakly $k$-superirreducible polynomials.

\begin{prop}\label{proposition4.2}
Let $p$ be a prime number. When $k\ge 2$, there are no weakly $k$-superirreducible 
polynomials over $\Z_p$ or over $\Q_p$.
\end{prop}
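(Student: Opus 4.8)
The plan is to reduce to the case of prime substitution degree and then to force reducibility of the composite by moving $\alpha$ into an $\ell$-th-power coset inside the local field $L=\Q_p(\alpha)$. Since a constant or reducible $f$ is never weakly $\ell$-superirreducible, and since scaling $f$ by a unit affects neither irreducibility nor the relevant composites, I may assume $f$ is monic irreducible. By the divisibility half of \Cref{lem:weak}, and since every $k\ge 2$ has a prime divisor, it then suffices to show, for each prime $\ell$, that such an $f$ fails to be weakly $\ell$-superirreducible. Fixing a root $\alpha$ of $f$ and writing $L=\Q_p(\alpha)$, \Cref{lem:extension} reduces the task to producing a single polynomial $g\in\Z_p[t]$ of degree $\ell$ for which $g(t)-\alpha$ is reducible in $L[t]$; the same $g$, viewed in $\Q_p[t]$, will simultaneously settle the $\Q_p$ statement.

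First I would extract the one piece of genuinely local information. As $m\to\infty$ the elements $1+\alpha p^m$ converge to $1$ in $L$, while the derivative of $X^\ell-(1+\alpha p^m)$ at $X=1$ is the fixed nonzero scalar $\ell$. Hensel's lemma in its strong form — comparing $|{-}\alpha p^m|_L$ with $|\ell|_L^2$ — therefore furnishes, for all sufficiently large $m$, an element $\eta\in\oO_L$ with $\eta^\ell=1+\alpha p^m$. I would fix one such $m$.

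With $\eta$ in hand, I would set $g(t)=p^{-m}\bigl((1+p^m t)^\ell-1\bigr)$. The binomial theorem gives $g(t)=\sum_{i=1}^{\ell}\binom{\ell}{i}p^{m(i-1)}t^i$, so every coefficient lies in $\Z_p$ and the leading coefficient $p^{m(\ell-1)}$ is nonzero, whence $g\in\Z_p[t]$ has degree exactly $\ell$. Moreover $g(t)-\alpha=p^{-m}\bigl((1+p^m t)^\ell-(1+\alpha p^m)\bigr)=p^{-m}\bigl((1+p^m t)^\ell-\eta^\ell\bigr)$ vanishes at $t=(\eta-1)/p^m\in L$. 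Thus $g(t)-\alpha$ has a root in $L$ and, being of degree $\ell\ge 2$, is reducible there; by \Cref{lem:extension}, $f(g(t))$ is reducible over $\Q_p$, so $f$ is not weakly $\ell$-superirreducible over $\Z_p$ (nor over $\Q_p$), and the reduction completes the proof.

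The main obstacle is the integrality constraint built into the $\Z_p$ statement: over $\Q_p$ one could simply scale, taking $g(t)=p^{-m}(t^\ell-1)$ and using that $1+\alpha p^m$ is an $\ell$-th power, but this $g$ has a coefficient outside $\Z_p$. The substitution $t\mapsto 1+p^m t$ together with the normalising factor $p^{-m}$ is exactly what clears all denominators at once, since the binomial coefficients contribute only non-negative powers of $p$; this also renders the construction uniform in $\ell$, handling $\ell=p$ on the same footing as $\ell\ne p$ (the strong form of Hensel's lemma absorbing the smaller value of $|\ell|_L$ when $\ell=p$). I would emphasise that the indispensable step is the production of $\eta$: it is the availability of $\ell$-th roots near $1$ in a $p$-adic field, absent over $\Z$ and $\Q$, that forces the collapse recorded here and thereby obstructs any naive lifting of superirreducibility from $\F_p$ through $\Z_p$ to $\Q$.
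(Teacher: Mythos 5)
Your proof is correct, and its skeleton coincides with the paper's: reduce via \Cref{lem:extension} to exhibiting an integral substitution $g\in\Z_p[t]$ for which $g(t)-\alpha$ acquires a root in the complete field $L=\Q_p(\alpha)$, and produce that root with the strong form of Hensel's lemma. The difference lies in the choice of $g$. The paper perturbs the \emph{identity} map, taking $g(t)=p^eh(t)+t$ with $e=1+|v_p(\alpha)|$ and applying Hensel to $g(t)-\alpha$ at the approximate root $t=\alpha$ itself; this works in one stroke for \emph{every} $h\in\Z_p[t]$ of degree $k$, so it refutes weak $k$-superirreducibility directly, needs no reduction to prime degree, and in fact yields the stronger statement that $f(p^eh(t)+t)$ is reducible for all such $h$. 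You instead perturb the \emph{power} map: Hensel extracts an $\ell$-th root $\eta$ of $1+\alpha p^m$ near $1$, and conjugating $t^\ell$ by $t\mapsto 1+p^mt$ together with the scaling $p^{-m}$ clears all denominators, giving the single explicit witness $g(t)=p^{-m}\bigl((1+p^mt)^\ell-1\bigr)\in\Z_p[t]$ with the visible root $(\eta-1)/p^m$ of $g(t)-\alpha$. Two small remarks. First, your reduction to prime $\ell$ via \Cref{lem:weak} is superfluous: nothing in the construction uses primality, since the derivative $\ell$ of $X^\ell-(1+\alpha p^m)$ at $X=1$ is nonzero in characteristic zero for any $\ell\ge 2$, and the strong Hensel hypothesis $|\alpha p^m|_L<|\ell|_L^2$ holds for large $m$ whether or not $p\mid\ell$; taking $\ell=k$ works verbatim. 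Second, ``scaling $f$ by a unit'' should read ``scaling by a nonzero constant of $\Q_p$,'' since the leading coefficient of $f\in\Z_p[x]$ need not be a unit of $\Z_p$ — this is harmless, because the definition of weak superirreducibility only concerns irreducibility of the composites over the fraction field $\Q_p[t]$, which is unaffected by nonzero constant factors. A minor advantage of your large-$m$ formulation is that it accommodates any $\alpha\in L$ without bookkeeping its valuation, whereas the paper tracks $v_p(\alpha)$ through the exponent $e$.
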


\begin{proof} Suppose, if possible, that $f\in \Q_p[x]$ is a weakly $k$-superirreducible 
polynomial. There is no loss of generality in assuming that $f$ is an irreducible 
polynomial lying in $\Z_p[x]$. Let $\alpha$ be a root of $f$ lying in a splitting field 
extension for $f$ over $\Q_p$, and let $e=1+|v_p(\alpha)|$, where $v_p(\alpha)$ is defined 
in such a manner that $|\alpha|_p=p^{-v_p(\alpha)}$. Let $h\in \Z_p[t]$ be any polynomial 
of degree $k$, put $g(t)=p^e h(t)+t$, and consider the equation $g(\beta)=\alpha$. Since 
$|g(\alpha)-\alpha|_p<1$ and $|g'(\alpha)|_p=|1+p^e h'(\alpha)|_p=1$, an application of 
Hensel's lemma demonstrates that the equation $g(\beta)=\alpha$ has a solution 
$\beta\in \Q_p(\alpha)$. Thus, the equation $\alpha =p^eh(\beta)+\beta$ has a solution 
$\beta\in \Q_p(\alpha)$, and by appealing to \Cref{lem:extension}, we conclude that the 
polynomial $f(p^e h(t)+t)$ is reducible over $\Q_p[t]$. Since $p^eh(t)+t\in \Z_p[t]$, we 
see that $f$ is neither weakly $k$-superirreducible over $\Z_p$ nor over $\Q_p$, and we 
arrive at a contradiction. The desired conclusion follows.
\end{proof}

The discussion of this section appears to show, therefore, that superirreducibility over 
$\F_p$, and indeed superirreducibility over $\Z_p$ and $\Q_p$, is not closely connected to 
corresponding superirreducibility over $\Z$ and $\Q$.

\section{Acknowledgements}
We thank an anonymous referee for providing helpful comments on improving the clarity of the paper as well as for bringing to our attention the references \cite{benedetto, jonesandlevy, odoni}. We are grateful to the developers of Magma, Mathematica, and SageMath for writing software that we used to search for numerical examples and perform plausibility checks on asymptotics proved in this paper.\par

The fourth author is supported by NSF grant DMS-2302514. The fifth author is supported by NSF grants DMS-1854398 and DMS-2001549. The first, third, and fourth authors are also supported by the Heilbronn Institute for Mathematical Research.


\begin{thebibliography}{99}

\bibitem{benedetto}
R. Benedetto, P. Ingram, R. Jones, M. Manes, J. H. Silverman, and T. J. Tucker, \emph{Current 
trends and open problems in arithmetic dynamics}, Bull. Amer. Math. Soc. {\bf 56} (2019), no. 4, 
611--685.

\bibitem{BFMW}
J. W. Bober, D. Fretwell, G. Martin and T. D. Wooley, \emph{Smooth values of polynomials}, 
J. Austral. Math. Soc. {\bf 108} (2020), no. 2, 245--261. 

\bibitem{thesis}
L. Du, \emph{Superirreducibility of polynomials, binomial coefficient asymptotics and stories from my classroom}, Ph.D. Thesis, University of Michigan, 2020.

\bibitem{gauss}
C. F. Gauss, \emph{Untersuchungen \"uber h\"ohere Arithmetik}, Chelsea, New York, 1965.

\bibitem{irelandrosen}
K. Ireland and M. Rosen,
\emph{A classical introduction to modern number theory}, Springer, New York, 1982.

\bibitem{jonesandlevy}
R. Jones and A. Levy, \emph{Eventually stable rational functions}, Int. J. Number Theory {\bf 13} 
(2017), no. 9, 2299--2318.

\bibitem{odoni}
R. W. K. Odoni, \emph{The Galois theory of iterates and composites of polynomials}, Proc. Lond. 
Math. Soc. (3) {\bf 51} (1985), no. 3, 385--414.

\bibitem{schinzel}
A. Schinzel, \emph{On two theorems of Gelfond and some of their applications},
Acta Arith. {\bf 13} (1967), no. 2, 177--236.

\bibitem{Sch2000}
A. Schinzel, \emph{Polynomials with special regard to reducibility}, Encyclopedia 
of Mathematics and its Applications, vol. 77, Cambridge University Press, Cambridge, 2000.

\bibitem{Sch1976}
W. M. Schmidt, \emph{Equations over finite fields. An elementary approach}, Lecture Notes 
in Math., vol. 536, Springer, New York, 1976.

\bibitem{wan}
D. Wan, \emph{Generators and irreducible polynomials over finite fields}, Math. Comp. 
{\bf 66} (1997), no. 219, 1195--1212.

\end{thebibliography}
\end{document}